\newtheorem{thm}{Theorem}[section]
\newtheorem{cor}[thm]{Corollary}
\newtheorem{lem}[thm]{Lemma}
\newtheorem{prop}[thm]{Proposition}
\theoremstyle{definition}
\newtheorem{defn}[thm]{Definition}
\theoremstyle{remark}
\newtheorem{prob}{Problem}
\numberwithin{equation}{section}
\newcommand{\norm}[1]{\left\Vert#1\right\Vert}
\newcommand{\abs}[1]{\left\vert#1\right\vert}
\newcommand{\set}[1]{\left\{#1\right\}}
\newcommand{\To}{\longrightarrow}
\newcommand{\nat}{\mathbb{N}}
\begin{document}
\setcounter{tocdepth}{1}


\title{On projective Banach lattices of the form $C(K)$ and $FBL[E]$}
\author[A.\ Avil\'es]{Antonio Avil\'es}
\address{Universidad de Murcia, Departamento de Matem\'{a}ticas, Campus de Espinardo 30100 Murcia, Spain.}
\email{avileslo@um.es}

\author[G. Mart\'inez-Cervantes]{Gonzalo Mart\'inez-Cervantes}
\address{Universidad de Murcia, Departamento de Matem\'{a}ticas, Campus de Espinardo 30100 Murcia, Spain.}
\email{gonzalo.martinez2@um.es}

\author[J.D. Rodr\'iguez Abell\'an]{Jos\'e David Rodr\'iguez Abell\'an}
\address{Universidad de Murcia, Departamento de Matem\'{a}ticas, Campus de Espinardo 30100 Murcia, Spain.}
\email{josedavid.rodriguez@um.es}

\thanks{Authors supported by project MTM2017-86182-P (Government of Spain, AEI/FEDER, EU) and project 20797/PI/18 by Fundaci\'{o}n S\'{e}neca, ACyT Regi\'{o}n de Murcia. Third author supported by FPI contract of Fundaci\'on S\'eneca, ACyT Regi\'{o}n de Murcia.}

\keywords{Schur property; Banach space; Banach lattice; Free Banach lattice; Projectivity; $c_0$; $\ell_p$; $C(K)$; $FBL[E]$}

\subjclass[2010]{46B43, 06BXX}

\begin{abstract}
We show that if a Banach lattice is projective, then every bounded sequence that can be mapped by a homomorphism onto the basis of $c_0$ must contain an $\ell_1$-subsequence. As a consequence, if Banach lattices $\ell_p$ or $FBL[E]$ are projective, then $p=1$ or $E$ has the Schur property, respectively. On the other hand, we show that $C(K)$ is projective whenever $K$ is an absolute neighbourhood retract, answering a question by de Pagter and Wickstead.\end{abstract}

\maketitle

\setlength{\parskip}{4mm}

\section{Introduction}

In this paper we continue the program proposed by B. de Pagter and A. W. Wickstead \cite{dPW15} of studying the projective Banach lattices.

\begin{defn}\label{lambdaprojdef} Let $\lambda>1$ be a real number. A Banach lattice $P$ is \textit{$\lambda$-projective} if whenever $X$ is a Banach lattice, $J$ a closed ideal in $X$ and $Q \colon X \longrightarrow X/J$ the quotient map, then for every Banach lattice homomorphism $T \colon P \longrightarrow X/J$, there is a Banach lattice homomorphism $\hat{T} \colon P \longrightarrow X$ such that $T = Q \circ \hat{T}$ and $\|\hat{T}\| \leq \lambda\norm{T}$.
\end{defn}

A Banach lattice is called \textit{projective} in \cite{dPW15} if it is $(1 + \varepsilon)$-projective for every $\varepsilon > 0$. For a more intuitive terminology, and by analogy to similar notions in Banach spaces, we will call this \textit{$1^+$-projective} instead of just projective. Note that if $P$ is $\lambda$-projective, then $P$ is $\mu$-projective for every $\mu \geq \lambda$. We will call a Banach lattice \textit{$\infty$-projective} if it is $\lambda$-projective for some $\lambda> 1$. It is clear that, in the case of $\infty$-projective, $Q$ can be taken any surjective Banach lattice homomorphism.

The notion of free Banach lattice was also introduced in \cite{dPW15}.  If $A$ is a set with no extra structure, the free Banach lattice generated by $A$, denoted by $FBL(A)$, is a Banach lattice together with a bounded map $u \colon A \longrightarrow FBL(A)$ having the following universal property: for every Banach lattice $Y$ and every bounded map $v \colon A \longrightarrow Y$ there is a unique Banach lattice homomorphism $S \colon FBL(A) \longrightarrow Y$ such that $S \circ u = v$ and $\norm{S} = \sup \set{\norm{v(a)} : a \in A}$. The same idea is applied by A. Avil\'{e}s, J. Rodr\'{i}guez and P. Tradacete to define the concept of the free Banach lattice generated by a Banach space $E$, $FBL[E]$. This is a Banach lattice together with a bounded operator $u \colon E\To FBL[E]$  such that for every Banach lattice $Y$ and every bounded operator $T \colon E \longrightarrow Y$ there is a unique Banach lattice homomorphism $S \colon FBL[E] \longrightarrow Y$ such that $S \circ u = T$ and $\norm{S} = \norm{T}$.

In \cite{ART18} and \cite{dPW15} it is shown that both objects exist and are unique up to Banach lattices isometries. A more explicit description of these spaces is given in \cite{ART18} as follows:

Let $A$ be a non-empty set. For $x \in A$, let $\delta_x \colon [-1,1]^A \longrightarrow \mathbb{R}$ be the evaluation function given by $\delta_x(x^*) = x^*(x)$ for every $x^* \in [-1,1]^A$, and for every $f \colon [-1,1]^A \longrightarrow \mathbb{R}$ define $$\|f\| = \sup \set{\sum_{i = 1}^n \abs{ f(x_{i}^{\ast})} :  n \in \mathbb{N}, \, x_1^{\ast}, \ldots, x_n^{\ast} \in [-1,1]^A, \text{ }\sup_{x \in A} \sum_{i=1}^n \abs{x_i^{\ast}(x)} \leq 1 }.$$ 

The Banach lattice $FBL(A)$ is the Banach lattice generated by the evaluation functions $\delta_x$ inside the Banach lattice of all functions $f \colon [-1,1]^A\To\mathbb{R}$ with finite norm. The natural identification of $A$ inside $FBL(A)$ is given by the map $u \colon A \longrightarrow FBL(A)$ where $u(x) = \delta_x$. Since every function in $FBL(A)$ is a uniform limit of such functions, they are all continuous (with respect to the product topology) and positively homogeneous, i.e. they commute with multiplication by positive scalars.

Now, let $E$ be a Banach space. For a function $f \colon E^\ast \To \mathbb{R}$ consider the norm $$ \norm{f}_{FBL[E]} = \sup \set{\sum_{i = 1}^n \abs{f(x_{i}^{*})} : n \in \mathbb{N}, \, x_1^{*}, \ldots, x_n^{*} \in E^{*},\text{ }\sup_{x \in B_E} \sum_{i=1}^n \abs{x_i^{*}(x)} \leq 1 }.$$

The Banach lattice $FBL[E]$ is the closure of the vector lattice in $\mathbb R^{E^*}$ generated by the evaluations $\delta_x \colon x^\ast \mapsto x^\ast(x)$ with $x\in E$. These evaluations form the natural copy of $E$ inside $FBL[E]$. All the functions in $FBL[E]$ are positively homogeneous and $weak^\ast$-continuous  when restricted to the closed unit ball $B_{E^\ast}$. An alternative approach to the construction of $FBL[E]$ has been given in \cite{Troitsky}.

In our previous work \cite{AMRA19} we answered a question by B. de Pagter and A. W. Wickstead by showing that $c_0$ is not a projective Banach lattice. In Section \ref{SchurPropertyInBanachSpacesWithProjectiveFreeBanachLattice} we exploit some of the ideas of that paper further, so that we are able to show that projective Banach lattices enjoy the following property:

\begin{thm}\label{TheoremSubsequencesEquivalentTol1}
	Let $(u_i)_{i \in \mathbb{N}}$ be a bounded sequence of vectors in an $\infty$-projective Banach lattice $X$. Suppose that there exists a Banach lattice homomorphism $T \colon X \longrightarrow c_0$ such that $T(u_i) = e_i$ for every $i \in \mathbb{N}$, where $(e_i)_{i \in \mathbb{N}}$ is the canonical basis of $c_0$. Then there is a subsequence $(u_{i_k})_{k \in \mathbb{N}}$ equivalent to the canonical basis of $\ell_1$.
\end{thm}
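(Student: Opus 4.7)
The strategy is to use $\infty$-projectivity of $X$ to lift $T$ into a free-Banach-lattice-type auxiliary $W$ whose norm formula directly exposes the $\ell_1$-structure of any lattice-homomorphism lift of the basis $(e_n)$. My first candidate is $W = FBL(\mathbb{N})$ together with the canonical lattice homomorphism $Q \colon FBL(\mathbb{N}) \to c_0$ sending $\delta_n \mapsto e_n$ (obtained from the universal property applied to the bounded map $n \mapsto e_n$). A routine check using that lattice operations are pointwise shows $Q$ admits the explicit description $Q(f)_n = f(e_n^*)$, where $e_n^* \in [-1,1]^{\mathbb{N}}$ is the $n$-th coordinate vector. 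Fix $\lambda > 1$ with $X$ $\lambda$-projective; invoking the lifting property on $T$ through a surjection onto $c_0$ built from $FBL(\mathbb{N})$ (see the obstacle below for the needed adjustment), we obtain a lattice homomorphism $\hat T \colon X \to W$ with $Q \hat T = T$ and $\|\hat T\| \leq \lambda \|T\|$. The sequence $v_i := \hat T(u_i)$ is then bounded in $W$ and satisfies $v_i(e_j^*) = \delta_{ij}$.

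The $\ell_1$-estimate is immediate from the definition of the $FBL$-norm. For any distinct indices $i_1, \dots, i_N$, the functionals $(e_{i_k}^*)_{k=1}^N$ are disjointly supported on $\mathbb{N}$, so $\sup_{y \in \mathbb{N}} \sum_k |e_{i_k}^*(y)| \leq 1$, and the defining supremum of the norm gives, for $f = \sum_k a_k v_{i_k}$,
$$\|f\|_{FBL(\mathbb{N})} \;\geq\; \sum_{k=1}^N \bigl|f(e_{i_k}^*)\bigr| \;=\; \sum_{k=1}^N |a_k|.$$
Using that $\hat T$ is a bounded lattice homomorphism with $\hat T(u_{i_k}) = v_{i_k}$, this transfers to $\sum_k |a_k| \leq \|\hat T\| \cdot \|\sum_k a_k u_{i_k}\|_X$. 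Together with the trivial upper bound $\|\sum_k a_k u_{i_k}\|_X \leq (\sup_j \|u_j\|) \sum_k |a_k|$ coming from boundedness of $(u_i)$, this gives that $(u_{i_k})$ --- in fact, any subsequence of $(u_i)$ one cares to extract --- is equivalent to the canonical basis of $\ell_1$, with constants depending only on $\lambda$, $\|T\|$, and $\sup_j \|u_j\|$.

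The crux is the parenthetical caveat in the first paragraph: the canonical map $Q \colon FBL(\mathbb{N}) \to c_0$ is \emph{not} surjective, because the same norm inequality forces $\sum_n |f(e_n^*)| \leq \|f\|_{FBL(\mathbb{N})}$, and hence $Q(FBL(\mathbb{N})) = \ell_1 \subsetneq c_0$. One therefore cannot literally apply the definition of $\lambda$-projectivity with this particular $Q$. The main difficulty of the proof is arranging a genuine quotient map onto $c_0$ from some larger Banach lattice $\tilde W$ that still preserves the $\ell_1$-detecting property of the $FBL(\mathbb{N})$-norm at the distinguished evaluations $e_n^*$: naively using $W = FBL[c_0]$ restores surjectivity but allows non-lattice-homomorphism embeddings $x \mapsto \delta_x$ whose images $\delta_{e_n}$ are only $\ell_\infty$-equivalent, so surjectivity must be restored \emph{without} destroying the rigidity that the lattice-homomorphism hypothesis on $\hat T$ forces. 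My plan for this step is either to enlarge $FBL(\mathbb{N})$ by a lattice summand hitting $c_0/\ell_1$ in a way that leaves the disjoint-support estimate intact, or to combine the construction with a preliminary disjointification of $(u_i)$ inside $X$ (arranged so that $T(\tilde u_i) = e_i$ still holds); passing this hurdle is where essentially all the content of the argument lies.
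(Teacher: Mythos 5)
Your proposal stalls exactly where you say it does, and the obstacle you flag is not a technical inconvenience but a fatal obstruction to the whole strategy. Your $\ell_1$ lower estimate rests on evaluating at functionals $(x_n^*)$ in the cube that satisfy $\sup_{x}\sum_n |x_n^*(x)|\leq 1$ (``disjointly supported''), so that the defining supremum of the $FBL$-norm applies to them simultaneously. But, as you yourself compute, that very condition forces $\sum_n |f(x_n^*)|\leq \|f\|$ for all $f$, i.e.\ it forces the induced map into $\ell_\infty$ to land inside $\ell_1$. Hence \emph{no} surjection onto $c_0$ of the kind you need can have disjointly supported witnessing functionals, and after any legitimate application of projectivity the ``immediate'' estimate $\|\sum_k a_k v_{i_k}\|\geq\sum_k|a_k|$ is simply unavailable. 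Neither of your two proposed repairs addresses this: adding a lattice summand to hit $c_0/\ell_1$ cannot ``leave the disjoint-support estimate intact'' for the reason just given, and disjointifying $(u_i)$ inside $X$ both lacks a construction preserving $T(\tilde u_i)=e_i$ and would not help anyway (disjoint sequences need not be $\ell_1$-sequences --- the basis of $c_0$ is disjoint). A further symptom that the approach cannot work as stated: it would show that \emph{every} subsequence of $(u_i)$, indeed the whole sequence, is equivalent to the $\ell_1$ basis, which is strictly stronger than the theorem and is not what one should expect, since one can perturb the $u_i$ by elements of $\ker T$.

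The paper's proof supplies precisely the content you are missing, and it is genuinely different in mechanism. It argues by contradiction: if no subsequence is $\ell_1$, Rosenthal's theorem gives a weakly Cauchy subsequence, whose consecutive differences $y_n=u_{i_{2n+1}}-u_{i_{2n}}$ are weakly null. Projectivity is applied to the surjective lattice homomorphism $\Phi\colon FBL(L)\to c_0$ with $L=\mathcal{P}_{fin}(\omega)\setminus\{\emptyset\}$ from their earlier paper, whose witnessing functionals $x_n^*=(\chi_A(\{n\}))_{A\in L}$ deliberately \emph{overlap} (this is what makes $\Phi$ surjective); the only disjointness-type hypothesis retained is that for each finite $F\subset L$ some $x_n^*$ vanishes on $F$. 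The lifted images $f_n=\ddot T(y_n)$ then converge pointwise to $0$ because the $y_n$ are weakly null, and a dedicated extraction lemma (Lemma~\ref{lemmaauximproved}) uses this pointwise nullity together with continuity and finite-coordinate dependence to truncate the $x_n^*$ to finitely supported functionals $y_{m_k}^*$ along a subsequence which \emph{are} disjointly supported and almost biorthogonal to the $f_{m_k}$; only then does the $FBL$-norm yield a $(1-\varepsilon)$ lower $\ell_1$ estimate, contradicting weak nullity. So the two ingredients you would need to add are: the Rosenthal reduction to a weakly null sequence, and the perturbation/extraction argument replacing your unavailable disjoint functionals.
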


From this theorem we can deduce for example that no Banach lattice $\ell_p$ is $\lambda$-projective, for any $\lambda>0$ or $p\neq 1$. The prototype of $1^+$-projective Banach lattice is $FBL(A) = FBL[\ell_1(A)]$ (see \cite[Corollary 2.8]{ART18} and \cite[Proposition 10.2]{dPW15}), so it would be natural to wonder whether $FBL[E]$ might be $1^+$-projective as well for other Banach spaces $E$. We show that, for this to happen, the structure of $E$ must be very close to that of $\ell_1(A)$:

\begin{thm}\label{TheoremSchurPropertyInBanachSpacesWithProjectiveFreeBanachLattice}
Let $E$ be a Banach space. If $FBL[E]$ is $\infty$-projective, then $E$ has the Schur property (i.e. every weakly convergent sequence in $E$ converges in norm).
\end{thm}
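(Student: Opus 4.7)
I aim to argue the contrapositive: assuming $E$ fails the Schur property, I will show that $FBL[E]$ is not $\infty$-projective. The strategy is to produce a Banach lattice homomorphism $T\colon FBL[E]\To c_0$ and a bounded sequence $(\delta_{x_n})\sub FBL[E]$ with $T(\delta_{x_n})=e_n$ for every $n$, and then invoke Theorem~\ref{TheoremSubsequencesEquivalentTol1}. This would furnish a subsequence of $(\delta_{x_n})$ equivalent to the $\ell_1$-basis in $FBL[E]$, and because the canonical map $E\hookrightarrow FBL[E]$, $x\mapsto\delta_x$, is an isometric \emph{linear} embedding, the corresponding subsequence of $(x_n)$ would be $\ell_1$-equivalent in $E$. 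This contradicts weak nullity: by Mazur's theorem $0$ lies in the norm closed convex hull of a weakly null sequence, while every convex combination of an $\ell_1$-equivalent sequence is bounded below in norm by a positive constant.

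\textbf{Construction of $T$.} Pick a weakly null $(x_n)\sub E$ with $\inf_n\|x_n\|>0$ and, via Bessaga--Pe\l{}czy\'nski, pass to a basic subsequence; let $F=\overline{\mathrm{span}}\{x_n:n\in\nat\}$ and let $(\phi_n)\sub F^*$ be the associated (bounded) biorthogonal functionals. The construction proceeds in two extraction-and-subtraction stages. First, since $F$ is separable, $B_{F^*}$ is $w^*$-metrizable, so one extracts $\phi_{n_k}\overset{w^*}{\to}\phi$ in $F^*$; biorthogonality combined with $n_k\to\infty$ forces $\phi(x_m)=\lim_k\delta_{n_k,m}=0$ for every $m$, and $\psi_k:=\phi_{n_k}-\phi$ is then bounded, $w^*$-null in $F^*$, and still biorthogonal to $(x_{n_k})$. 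Second, extend each $\psi_k$ to $\tilde\psi_k\in E^*$ by Hahn--Banach and extract a $w^*$-convergent subsequence $\tilde\psi_{k_j}\overset{w^*}{\to}\rho$ in $E^*$; since $\tilde\psi_{k_j}|_F=\psi_{k_j}$ is $w^*$-null in $F^*$, one gets $\rho|_F=0$. Setting $z_l:=x_{n_{k_l}}$ and $g_j:=\tilde\psi_{k_j}-\rho$ therefore produces a bounded, $w^*$-null sequence $(g_j)\sub E^*$ with $g_j(z_l)=\psi_{k_j}(x_{n_{k_l}})=\delta_{jl}$. Then $S\colon E\To c_0$, $y\mapsto(g_j(y))_j$, is a well-defined bounded operator with $S(z_l)=e_l$, and the universal property of $FBL[E]$ lifts $S$ to the required lattice homomorphism $T$ with $T(\delta_{z_l})=e_l$.

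\textbf{Main obstacle.} The delicate step is the second $w^*$-extraction, which uses metrizability of $B_{E^*}$ and hence presupposes that $E$ is separable. For non-separable $E$ one must either reduce to the separable subspace $F$ --- for instance by transferring the $\infty$-projectivity of $FBL[E]$ to $FBL[F]$ through the canonical lattice embedding induced by $F\hookrightarrow E$ --- or provide a direct construction of bounded $w^*$-null extensions of the biorthogonal functionals from $F^*$ to $E^*$ by a Grothendieck- or ultrafilter-based device; handling this cleanly is the principal technical hurdle, whereas all the remaining steps are essentially a packaging of Hahn--Banach, Mazur, and Theorem~\ref{TheoremSubsequencesEquivalentTol1}.
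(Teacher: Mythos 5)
Your overall strategy coincides with the paper's: reduce the problem to producing an operator $S\colon E\To c_0$ sending a weakly null basic sequence to the unit vector basis of $c_0$, lift $S$ to a lattice homomorphism on $FBL[E]$ via the universal property, and apply Theorem~\ref{TheoremSubsequencesEquivalentTol1} together with the isometric embedding $E\hookrightarrow FBL[E]$ to contradict weak nullity. For separable $E$ your two-stage $w^*$-extraction is a correct (if slightly roundabout) substitute for Sobczyk's theorem, and that part of the argument is sound; note in passing that your first extraction is redundant, since the coordinate functionals of any basic sequence are automatically $w^*$-null on $F$, so the limit $\phi$ is simply $0$.

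The genuine gap is exactly where you locate it: extending $S_0\colon F\To c_0$ to all of $E$ when $E$ is nonseparable. Neither of your proposed remedies works as stated. Transferring $\infty$-projectivity from $FBL[E]$ to $FBL[F]$ would require a bounded operator $E\To F$ restricting to the identity on $F$, i.e.\ a projection of $E$ onto $F$, which need not exist; and there is no general ``Grothendieck- or ultrafilter-based'' extension of $c_0$-valued operators beyond the separable setting --- by Phillips' theorem $c_0$ is uncomplemented in $\ell_\infty$, so already the identity on $c_0\subset\ell_\infty$ admits no extension. The missing idea, which is the real content of the nonseparable case, is to use the $\infty$-projectivity of $FBL[E]$ a \emph{second} time: lifting the canonical homomorphism $FBL[E]\To C(B_{E^*})$ through the quotient $C([-1,1]^\Gamma)\To C(B_{E^*})$ shows that $E$ embeds isomorphically into $C([-1,1]^\Gamma)$ (Lemma~\ref{LemmaEIsomorphictToASubspaceOfContinuousFunctionsWhenFBLIsProjective}); since that space has the separable complementation property (Lemma~\ref{SCPCK}, via Mibu's theorem), one places the image of $F$ inside a separable complemented subspace, applies Sobczyk there, and composes back to obtain $S\colon E\To c_0$ (Lemma~\ref{LemmaToExtendOperatorsWhenFBLIsProjective}). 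Without some such device your argument proves the theorem only for separable $E$.
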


Moreover, at the end of Section \ref{SectionSchurProperty} we provide a counterexample which shows that, in the category of nonseparable Banach spaces, the converse of this result does not hold. We still do not know if there exists a separable Banach space $E$ which has the Schur property and such that $FBL[E]$ is not $\infty$-projective.

Section \ref{ProjectivityOfC(K)} is devoted to the study of projectivity on Banach lattices of the form $C(K)$ of continuous functions on a compact space with the supremum norm. It was shown by de B. Pagter and A. W. Wickstead \cite{dPW15} that if $C(K)$ is $1^+$-projective then $K$ is an absolute neighbourhood retract, and the converse holds true if $K$ is a compact subset of $\mathbb{R}^n$. They asked whether the converse holds for general $K$ \cite[Question 12.12]{dPW15}. We solve their problem in the positive:

\begin{thm}
\label{thmANR}
If $K$ is a compact Hausdorff topological space, then $C(K)$ is $1^+$-projective if, and only if, $K$ is an absolute neighbourhood retract.
\end{thm}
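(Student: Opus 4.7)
The direction ``$C(K)$ is $1^+$-projective $\implies K$ is an ANR'' is already established in \cite{dPW15}, so only the converse remains. Given a lattice homomorphism $T \colon C(K) \to X/J$ with $\|T\| \leq 1$ and $\varepsilon > 0$, the plan is to represent the principal ideal generated by a positive lift of $T(\mathbf{1})$ as a $C(L)$-space via Kakutani's theorem, pull $f \in C(K)$ back through the ANR retraction to produce an element of $C(L) \sub X$, and cut off by a Urysohn function supported where the retraction is defined.

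First embed $K \hookrightarrow [-1,1]^A$ via $x \mapsto (a(x))_{a \in A}$ with $A = B_{C(K)}$. By the ANR hypothesis there is an open neighbourhood $U \sub [-1,1]^A$ of $K$ and a continuous retraction $r \colon U \to K$. Lift $e_T := T(\mathbf{1}) \geq 0$ to some $\tilde{e} \geq 0$ in $X$ with $\|\tilde{e}\| \leq 1 + \varepsilon$ (take a norm-minimising lift and replace by its positive part), and for each $a \in A$ lift $v_a := T(a)$ to $\tilde{v}_a \in X$ with $|\tilde{v}_a| \leq \tilde{e}$: take any lift $w$ and replace it by $(w \vee (-\tilde{e})) \wedge \tilde{e}$, using $|v_a| \leq e_T$.

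The principal ideal $X_{\tilde{e}}$, with its order-unit norm, is isometrically lattice-isomorphic to $C(L)$ for some compact Hausdorff $L$ (Kakutani), with $\tilde{e}$ corresponding to $\mathbf{1}$; the family $(\tilde{v}_a)_{a\in A}$ lies in the closed unit ball of $C(L)$, so $\psi \colon L \to [-1,1]^A$, $\psi(\ell)(a) := \tilde{v}_a(\ell)$, is continuous. The closed ideal $J \cap X_{\tilde{e}}$ of $C(L)$ corresponds to a closed set $L_J \sub L$ with $X_{\tilde{e}}/(J \cap X_{\tilde{e}}) \cong C(L_J)$. Since $T$ induces a unital lattice homomorphism $C(K) \to C(L_J)$, which has the form $f \mapsto f \circ \phi$ for a unique continuous $\phi \colon L_J \to K$, and since $\tilde{v}_a|_{L_J} = v_a = a \circ \phi$, the restriction $\psi|_{L_J}$ coincides with the composition $L_J \xrightarrow{\phi} K \hookrightarrow [-1,1]^A$; in particular $\psi(L_J) \sub K \sub U$.

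Let $V := \psi^{-1}(U)$, which is open in $L$ and contains $L_J$, and pick by Urysohn a continuous $h \colon L \to [0,1]$ with $h \equiv 1$ on $L_J$ and $h \equiv 0$ on $L \setminus V$. Define $\hat{T} \colon C(K) \to C(L) \sub X$ by $\hat{T}(f)(\ell) := h(\ell)\, f(r(\psi(\ell)))$ for $\ell \in V$ and $\hat{T}(f)(\ell) := 0$ otherwise; the cut-off $h$ kills any discontinuity as $\ell$ approaches $L \setminus V$, linearity is clear, and $h \geq 0$ lets $\vee$ and $\wedge$ pass through, making $\hat{T}$ a lattice homomorphism. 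For $\ell \in L_J$ one has $h(\ell) = 1$ and $r(\psi(\ell)) = \phi(\ell)$, hence $\hat{T}(f)(\ell) = f(\phi(\ell)) = T(f)(\ell)$, giving $Q \circ \hat{T} = T$. Finally, combining $\|\hat{T}(f)\|_{C(L)} \leq \|f\|_\infty$ with $\|y\|_X \leq \|y\|_{C(L)} \|\tilde{e}\|_X$ for $y \in X_{\tilde{e}}$ yields $\|\hat{T}\| \leq 1+\varepsilon$.

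The main obstacle is that $\psi$ need not map all of $L$ into the domain $U$ of $r$, so $f \circ r \circ \psi$ is not globally defined; the Urysohn cut-off $h$, vanishing off $\psi^{-1}(U)$, is precisely what bridges this gap, and it is here that the ANR hypothesis is indispensable.
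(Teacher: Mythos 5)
Your proof is correct, but it follows a genuinely different route from the paper's. The paper does not verify the lifting property against an arbitrary quotient $X\to X/J$ directly: instead it presents $C(K)$ as a quotient of the $1^+$-projective free Banach lattice $FBL(B_{C(K)})$ and invokes a transfer criterion (Proposition~\ref{quotientofprojective}) reducing the problem to constructing a lattice-homomorphic right inverse $S\colon C(K)\to FBL(B_{C(K)})$ of norm $\leq 1$. The geometric core is the same in both arguments -- embed $K$ in the cube $[-1,1]^{B_{C(K)}}$, compose with the ANR retraction, and kill the region where the retraction is undefined by a Urysohn cut-off -- but the paper's technical burden is entirely different: it must verify that the section $Sh$ actually lies in $FBL(B_{C(K)})$, which requires positive homogeneity, the weighting by $|\delta_{\bar 1}|$, and a Stone--Weierstrass approximation by functions depending on finitely many coordinates. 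Your argument instead runs through the Kakutani representation of the principal ideal generated by a positive $(1+\varepsilon)$-lift of $T(\mathbf 1)$, the correspondence between closed ideals of $C(L)$ and closed subsets $L_J\subseteq L$, and the representation of the induced unital lattice homomorphism $C(K)\to C(L_J)$ as a composition operator; this avoids free Banach lattices altogether and makes the source of the $1+\varepsilon$ completely transparent, at the cost of several identifications you use implicitly but which do all check out (completeness of the order-unit norm on $X_{\tilde e}$, so that Kakutani yields all of $C(L)$; injectivity and surjectivity of $X_{\tilde e}/(J\cap X_{\tilde e})\to (X/J)_{Q(\tilde e)}$, the latter via truncation of lifts, which is what legitimises viewing $T$ as a map into $C(L_J)$). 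One wording slip: a norm-minimising lift of $T(\mathbf 1)$ need not exist; you should take a lift of norm at most $\|T(\mathbf 1)\|+\varepsilon$ and pass to its positive part, which is all your estimate needs.
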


\section{Schur property in Banach spaces with projective free Banach lattice}\label{SchurPropertyInBanachSpacesWithProjectiveFreeBanachLattice}
\label{SectionSchurProperty}
%
As a preparation towards Theorem~\ref{TheoremSubsequencesEquivalentTol1} we provide a criterion to obtain $\ell_1$-subsequences in the free Banach lattice $FBL(L)$. We denote the index set $L$ instead of $A$ for convenience in latter application.

\begin{lem}
\label{lemmaauximproved}
Let $L$ be an infinite set, $(x_n^*)_{ n \in \mathbb{N} }$ a sequence in $[-1,1]^L$ and $(f_n)_{ n \in \mathbb{N} }$  a sequence in $FBL(L)$ with the following properties:
\begin{enumerate}
	\item $(f_n)_{ n \in \mathbb{N} }$ converges pointwise to 0, i.e. $\lim_{n \to \infty} f_n(x^*) = 0$ for every $x^* \in [-1,1]^L$;
	\item $f_n(x_n^*)=1$ for every $n \in \nat$;
	\item For every finite set $F\subset L$ there is a natural number $n$ such that $x_n^*|_F=0$, i.e. the restriction of $x_n^*$ to $F$ is null.
\end{enumerate}
Then, for every $\varepsilon>0$ there is a subsequence $(f_{n_k})_{ k \in \nat }$ such that for every $l \in \mathbb{N}$ and for every $\lambda_1, \ldots, \lambda_l \in \mathbb{R}$, $$\norm{ \sum_{k=1}^{l}\lambda_k f_{n_k} } \geq (1 - \varepsilon)\sum_{k=1}^l |\lambda_k|.$$
\end{lem}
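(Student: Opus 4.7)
The plan is to use the dual formula $\|f\|\geq\sum_j|f(y_j^*)|$, valid for any admissible tuple $(y_j^*)\subset[-1,1]^L$ (meaning $\sup_{x\in L}\sum_j|y_j^*(x)|\leq 1$), and to construct by induction a subsequence $(n_k)$, pairwise disjoint finite sets $F_k\subset L$, and functionals $y_k^*:=x_{n_k}^*\cdot\mathbf{1}_{F_k}\in[-1,1]^L$, such that the matrix $M_{jk}:=f_{n_k}(y_j^*)$ is close to the identity entrywise. Pairwise disjointness of the $F_k$ together with $|y_k^*|\leq 1$ will make $(y_1^*,\ldots,y_l^*)$ admissible for every $l$, and then the standard near-identity estimate
\[
\sum_{j=1}^l\Big|\sum_{k=1}^l\lambda_k M_{jk}\Big|\ \geq\ \Big(1-\max_k\sum_j|M_{jk}-\delta_{jk}|\Big)\sum_{k=1}^l|\lambda_k|
\]
reduces the problem to controlling the column sums of $|M-I|$ uniformly by $\varepsilon$.

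Two preparations will be useful. First, condition (3) strengthens to: for every finite $F\subset L$, \emph{infinitely many} $n$ satisfy $x_n^*|_F=0$. Indeed, positive homogeneity gives $f_n(0)=0$, so (2) forces $x_n^*\neq 0$ for all $n$, whence $\bigcap\{n:x_n^*|_{F'}=0\}$ over all finite $F'\supset F$ is empty; if $\{n:x_n^*|_F=0\}$ were finite, this decreasing intersection lives in a finite set, so some finite sub-intersection is already empty, producing a finite $F''$ with $\{n:x_n^*|_{F''}=0\}=\emptyset$ and contradicting (3). Second, each $f_n$ is continuous on the compact product $[-1,1]^L$, hence uniformly continuous: for every $\eta>0$ there is a finite $G_n^\eta\subset L$ such that any two functionals agreeing on $G_n^\eta$ give $f_n$-values within $\eta$; in particular, using $f_n(0)=0$, any $z^*\in[-1,1]^L$ vanishing on $G_n^\eta$ satisfies $|f_n(z^*)|<\eta$.

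Fix tolerances $\eta_k>0$ with $\sum_k\eta_k<\varepsilon/4$. Having chosen $n_1<\cdots<n_{k-1}$, disjoint $F_1,\ldots,F_{k-1}$, and the induced $y_1^*,\ldots,y_{k-1}^*$, let
\[
H_{k-1}\ :=\ \bigcup_{i<k}F_i\ \cup\ \bigcup_{i<k}G_{n_i}^{\eta_k},
\]
a finite subset of $L$. Using the strengthened (3) together with the pointwise convergence (1), choose $n_k>n_{k-1}$ so that $x_{n_k}^*|_{H_{k-1}}=0$ and $|f_{n_k}(y_i^*)|<\eta_k$ for every $i<k$; pick $G_{n_k}^{\eta_k}$ witnessing uniform continuity of $f_{n_k}$ at tolerance $\eta_k$; and set $F_k:=G_{n_k}^{\eta_k}\setminus H_{k-1}$ together with $y_k^*:=x_{n_k}^*\cdot\mathbf{1}_{F_k}$. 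Since $x_{n_k}^*$ vanishes on $G_{n_k}^{\eta_k}\cap H_{k-1}=G_{n_k}^{\eta_k}\setminus F_k$, the restrictions $y_k^*|_{G_{n_k}^{\eta_k}}$ and $x_{n_k}^*|_{G_{n_k}^{\eta_k}}$ coincide, giving $|M_{kk}-1|<\eta_k$. For off-diagonal entries with $j<k$, $|M_{jk}|<\eta_k$ is guaranteed by the choice of $n_k$; for $j>k$, the inclusion $G_{n_k}^{\eta_j}\subset H_{j-1}$ combined with $F_j\cap H_{j-1}=\emptyset$ forces $y_j^*|_{G_{n_k}^{\eta_j}}=0$, and hence $|M_{jk}|<\eta_j$.

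The main obstacle I anticipate is the \emph{adaptive bookkeeping} needed to make the column sums of $|M-I|$ independent of $l$: the off-diagonal entries with $j>k$ must be controlled by the summable $\eta_j$ (chosen at the later step $j$) rather than by a fixed $\eta_k$ (chosen at step $k$), or else the total column-$k$ error would grow linearly in $l$. This dictates the form of $H_{k-1}$, which must include the uniform-continuity sets $G_{n_i}^{\eta_k}$ of \emph{earlier} functions at the \emph{current} tolerance $\eta_k$, and symmetrically defers the control of $M_{jk}$ ($j>k$) to step $j$ via $G_{n_k}^{\eta_j}\subset H_{j-1}$. With this arrangement, the column-$k$ error sum is at most $k\eta_k+\sum_{j>k}\eta_j$, bounded uniformly in $k$ by $\varepsilon$, so the estimate displayed above yields $\|\sum_k\lambda_k f_{n_k}\|\geq(1-\varepsilon)\sum_k|\lambda_k|$.
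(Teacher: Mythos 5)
Your proof is correct and follows essentially the same route as the paper's: truncate the $x_{n_k}^*$ to pairwise disjoint finite supports to obtain an admissible dual family $(y_k^*)$, make the evaluation matrix $f_{n_k}(y_j^*)$ near-identity (diagonal via continuity/finite-coordinate dependence, off-diagonal via pointwise convergence for $j<k$ and continuity at $0$ for $j>k$), and finish with the standard perturbation estimate; you merely merge the paper's two successive subsequence extractions into one induction. One small repair: the condition $\sum_k\eta_k<\varepsilon/4$ alone does not bound $k\eta_k$ uniformly, so you should choose the tolerances with, say, $\eta_k\leq\varepsilon/(k\,2^{k+2})$ (the paper avoids this by distributing a doubly-indexed summable family $\varepsilon_{ij}$ over the matrix entries).
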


\begin{proof}

Fix $\varepsilon>0$ and $(\varepsilon_{ij})_{i,j=1}^{\infty}$ a family of positive real numbers such that $\varepsilon = \sum_{i,j=1}^{\infty}\varepsilon_{ij}$ and $\varepsilon_{ij} =  \varepsilon_{ji}$ for every $i,j$.

We are going to define a subsequence $(f_{m_k})_{k \in \mathbb{N}}$ of $(f_n)_{n \in \mathbb{N}}$ as follows:

Let $m_1 := 1$. Since the elements of $FBL(L)$ are continuous with respect to the product topology, there is a neighbourhood $U_{m_1}$ of $x_{m_1}^*$ such that $f_{m_1}(x^*) \in [1-\varepsilon_{11}, 1+\varepsilon_{11}]$ whenever $x^* \in U_{m_1}$.
In particular, there is a finite set $F_{m_1} \subset L$ such that $f_{m_1}(x^*)\in [1-\varepsilon_{11}, 1+\varepsilon_{11}]$ whenever $x^*|_{F_{m_1}} = x_{m_1}^*|_{F_{m_1}}$.

By property $(3)$, there exists $m_2 \in \mathbb{N}$ such that $x_{m_2}^*|_{F_{m_1}} = 0$. Since $f_{m_2}$ is continuous, there exists a finite set $F_{m_2} \supset F_{m_1}$ such that $f_{m_2}(x^*) \in [1 - \varepsilon_{22},1+\varepsilon_{22}]$ whenever $x^*|_{F_{m_2}} = x_{m_2}^*|_{F_{m_2}}$.

Suppose that we have $f_{m_1}, \ldots, f_{m_{k-1}}$ for some $k \geq 2$, and $F_{m_1}, \ldots, F_{m_{k-1}}$ finite subsets of $L$ such that $F_{m_1} \subset \cdots \subset F_{m_{k-1}}$, $x_{m_i}^*|_{F_{m_{i-1}}} = 0$ for every $i = 2, \ldots, k-1$ and $f_{m_i}(x^*) \in [1 - \varepsilon_{ii},1+\varepsilon_{ii}]$ whenever $x^*|_{F_{m_i}} = x_{m_i}^*|_{F_{m_i}}$.

Property $(3)$ guarantees the existence of a number $m_k \in \mathbb{N}$ such that $x_{m_k}^*|_{F_{m_{k-1}}} = 0$. It follows from property $(2)$ that there is a finite set $F_{m_k} \subset L$, with $F_{m_{k-1}} \subset F_{m_k}$, such that $f_{m_k}(x^*) \in [1 - \varepsilon_{kk},1+\varepsilon_{kk}]$ whenever $x^*|_{F_{m_k}} = x_{m_k}^*|_{F_{m_k}}$.

For each $k\in \mathbb{N}$ define $y^*_{m_k} \colon L \longrightarrow [-1,1]$ such that $y_{m_k}^*|_{F_{m_k}}=x_{m_k}^*|_{F_{m_k}}$ and $y_{m_k}^*(x)=0$ whenever $x \in L \setminus F_{m_k}$. Notice that $f_{m_k}(y_{m_k}^*) \in [1-\varepsilon_{kk},1+\varepsilon_{kk}]$ for every $k\in \mathbb{N}$. On the other hand, if $m_k<m_{k'}$ and $y_{m_k}^*(x) \neq 0$ then $x \in F_{m_k}$ (by the definition of $y_{m_k}^*$) and therefore $x^*_{m_{k'}}(x)=0$, so $y^*_{m_{k'}}(x)=0$. It follows that $y^*_{m_k}$ and $y^*_{m_{k'}}$ have disjoint supports. In particular, $$ \sup_{x \in L} \sum_{k=1}^l \abs{y_{m_k}^{\ast}(x)} \leq 1. $$ 

Let $\nu_1:= m_1 = 1$. Combining property $(1)$ with the fact that the functions $f_n$ are continuous in $[-1,1]^L$ and the functions $y_{m_n}^*$ converge to 0 in the product topology, we have that there exists $\nu_2 \in \mathbb{N}$ such that $$|f_{m_n}(y_{m_{\nu_1}}^*)| \leq \varepsilon_{12} \text{ and } |f_{m_{\nu_1}}(y_{m_n}^*)| \leq \varepsilon_{21} = \varepsilon_{12} \text{ for every }n \geq \nu_2.$$

Again, using the above, there exists a natural number $\nu_3 \geq \nu_2$ such that $$|f_{m_n}(y_{m_{\nu_1}}^*)| \leq \varepsilon_{13},\text{ }|f_{m_{\nu_1}}(y_{m_n}^*)| \leq \varepsilon_{31} = \varepsilon_{13}$$ and $$|f_{m_n}(y_{m_{\nu_2}}^*)| \leq \varepsilon_{23},\text{ }|f_{m_{\nu_2}}(y_{m_n}^*)| \leq \varepsilon_{32} = \varepsilon_{23}$$ for every $n \geq \nu_3.$

Suppose that we have $\nu_1 \leq \nu_2 \leq \cdots \leq \nu_p \in \mathbb{N}$ such that $$|f_{m_n}(y_{m_{\nu_j}}^*)| \leq \varepsilon_{jp} \text{ and }|f_{m_{\nu_j}}(y_{m_n}^*)| \leq \varepsilon_{pj} = \varepsilon_{jp} \text{ for every } j<p \text{ and every }n \geq \nu_p.$$

Then, there exists a natural number $\nu_{p+1} \geq \nu_p$ such that $$|f_{m_n}(y_{m_{\nu_j}}^*)| \leq \varepsilon_{j(p+1)} \text{ and }|f_{m_{\nu_j}}(y_{m_n}^*)| \leq \varepsilon_{(p+1)j} = \varepsilon_{j(p+1)} \text{ for every } j<p+1$$ and every $n \geq \nu_{p+1}.$

Since $f_{m_{\nu_i}}(y_{m_{\nu_i}}^*) \in [1-\varepsilon_{\nu_i \nu_i}, 1+\varepsilon_{\nu_i \nu_i}]$ for every $i$, we can write $f_{m_{\nu_i}}(y_{m_{\nu_i}}^*) = 1 + \eta_{\nu_i \nu_i}$ with $|\eta_{\nu_i \nu_i}| \leq \varepsilon_{\nu_i \nu_i}$.

On the other hand, if $k \neq i$, we have that $f_{m_{\nu_k}}(y_{m_{\nu_i}}^*) \in [-\varepsilon_{i k}, \varepsilon_{i k}]$, and we will write  $f_{m_{\nu_k}}(y_{m_{\nu_i}}^*) = \eta_{\nu_i \nu_k}$ with $|\eta_{\nu_i \nu_k}| \leq \varepsilon_{i k}$.

We take the subsequence $f_{n_k} := f_{m_{\nu_k}}$ for every $k \in \mathbb{N}$.

Now, let $\lambda_1, \ldots, \lambda_l \in \mathbb{R}$. We have that

\begin{eqnarray*} 
 \norm{ \sum_{k=1}^{l}\lambda_k f_{n_k} } & = &  \sup \set{\sum_{i = 1}^q \abs{\sum_{k=1}^{l}\lambda_k f_{n_k}(z_{i}^{*})} : q \in \mathbb{N}, \, z_i^{*} \in [-1,1]^L,\text{ }\sup_{x \in L} \sum_{i=1}^q \abs{z_i^{*}(x)} \leq 1 } \\ & \geq & \sum_{i = 1}^l \abs{\sum_{k=1}^{l}\lambda_k f_{m_{\nu_k}}(y_{m_{\nu_i}}^{*})}
= \sum_{i = 1}^l \abs{\lambda_i f_{m_{\nu_i}}(y_{m_{\nu_i}}^{*}) + \sum_{k \neq i}\lambda_k f_{m_{\nu_k}}(y_{m_{\nu_i}}^{*})} \\ & = & \sum_{i=1}^l \abs{ \lambda_i (1 +  \eta_{\nu_i \nu_i}) + \sum_{k \neq i}\lambda_k \eta_{\nu_i \nu_k} } = \sum_{i=1}^l \abs{\lambda_i + \sum_{k=1}^l \lambda_k \eta_{\nu_i \nu_k}} \\ & \geq & \sum_{i=1}^l \abs{\lambda_i} - \sum_{i=1}^l \sum_{k=1}^l \abs{\lambda_k} \abs{\eta_{\nu_i \nu_k}} = \sum_{i=1}^l \abs{\lambda_i} - \sum_{k=1}^l \abs{\lambda_k}\left(\sum_{i=1}^l \abs{\eta_{\nu_i \nu_k}}\right) \\ 
 &\geq & \sum_{i=1}^l \abs{\lambda_i} - \sum_{k=1}^l \abs{\lambda_k}\left(\varepsilon_{\nu_k \nu_k}+\sum_{i \neq k} \varepsilon_{ik}\right)
 \geq  (1 - \varepsilon) \sum_{k=1}^l \abs{\lambda_k}.
\end{eqnarray*}
\end{proof}

We are ready to prove Theorem~\ref{TheoremSubsequencesEquivalentTol1} from the introduction:

\begin{proof}[Proof of Theorem \ref{TheoremSubsequencesEquivalentTol1}]
Suppose that there is no subsequence equivalent to the canonical basis of $\ell_1$. Then, by Rosenthal's $\ell_1$-theorem \cite[Theorem 10.2.1]{AlbKal}, the sequence $(u_i)_{i \in \mathbb{N}}$ has a weakly Cauchy subsequence $(u_{i_k})_{k \in \mathbb{N}}$. Thus, the sequence $(y_n)_{n \in \mathbb{N}}$, with $y_n = u_{i_{2n+1}} - u_{i_{2n}}$ for every $n \in \mathbb{N}$, is weakly null and bounded.

Let us denote $T(x) = (T(x)_j)_{j \in \mathbb{N}} \in c_0$ for $x \in X$. Let $\tilde{T} \colon X \longrightarrow c_0$ be the map given by $\tilde{T}(x) = (T(x)_{i_{2k+1}})_{k \in \mathbb{N}}$.

Let $L = \mathcal{P}_{fin}(\omega) \setminus \set{\emptyset}$ be the set of the finite parts of $\omega$ without the empty set, and let $\Phi \colon FBL(L) \longrightarrow c_0$ be the map given by $$\Phi(f) = \big( f \left( \left( \chi_A( \set{1}) \right)_{A \in L} \right), f \left( \left( \chi_A( \set{2}) \right)_{A \in L} \right), \ldots \big) = \big( f \left( \left( \chi_A( \set{n}) \right)_{A \in L} \right) \big)_{n \in \mathbb{N}}$$ for every $f \colon [-1,1]^L \longrightarrow \mathbb{R} \in FBL(L)$, where for $A \in L$, $\chi_A \colon L \longrightarrow [-1,1]$ is the map given by $\chi_A(B) = 1$ if $B \subset A$ and $\chi_A(B) = 0$ if $B \not\subset A$.

By \cite[Lemma 2.2]{AMRA19}, $\Phi$ is a surjective Banach lattice homomorphism. Since $X$ is $\infty$-projective, there exists a bounded Banach lattice homomorphism $\ddot{T} \colon X \longrightarrow FBL(L)$ such that $\Phi \circ \ddot{T} = \tilde{T}$. We are going to find now $f_n$ and $x_n^\ast$ for the application of Lemma~\ref{lemmaauximproved}.

Let $f_n := \ddot{T}(y_n)$ for every $n \in \mathbb{N}$. The sequence $(f_n)_{n \in \mathbb{N}}$ converges pointwise to 0, since $(y_n)_{n \in \mathbb{N}}$ is weakly null. It follows from the equality $\Phi(f_n)=(\Phi \circ \ddot{T})(y_n) = \tilde{T}(y_n) = e_n$  and the definition of $\Phi$ that $$f_n \left( \left( \chi_A( \set{n}) \right)_{A \in L} \right) = \Phi(f_n)_n = e_n(n)= 1$$
for every $n \in \nat$. Set $x_n^*=\left(\chi_A ( \set{n}) \right)_{A\in L} \in [-1,1]^L$ for every $n\in \nat$.
Notice that if $F\subset L$ is finite, then  $x_n^* (S)=0$ whenever $n \notin \bigcup_{S\in F} S$, so condition (3) of Lemma~\ref{lemmaauximproved} is also satisfied.

We can now apply Lemma \ref{lemmaauximproved}, so for every $\varepsilon>0$ there is a subsequence $(f_{n_k})_{ k \in \nat }$ such that for every $l \in \mathbb{N}$ and for every $\lambda_1, \ldots, \lambda_l \in \mathbb{R}$, $$\norm{ \sum_{k=1}^{l}\lambda_k f_{n_k} } \geq (1 - \varepsilon)\sum_{k=1}^l |\lambda_k|.$$
On the other hand, since $\ddot{T}$ and $(y_n)_{n \in \mathbb{N}}$ are bounded, there are two constants $C,M>0$ such that 
$$ \norm{ \sum_{k=1}^{l}\lambda_k f_{n_k} } = \norm{ \ddot{T} \left( \sum_{k=1}^{l}\lambda_k y_{n_k} \right)}\leq C\norm{\sum_{k=1}^{l}\lambda_k y_{n_k}} \leq CM \sum_{k=1}^l |\lambda_k|.$$

Thus, $$(1 - \varepsilon)\sum_{k=1}^l |\lambda_k| \leq  \norm{ \sum_{k=1}^{l}\lambda_k f_{n_k} } \leq CM \sum_{k=1}^l |\lambda_k|,$$ so that $(f_{n_k})_{k \in \mathbb{N}}$ is equivalent to the canonical basis of $\ell_1$, and in consequence, $(y_{n_k})_{k \in \mathbb{N}}$ is also equivalent to the canonical basis of $\ell_1$, which is a contradiction.
\end{proof}

\begin{cor}
The Banach lattices $c_0$ and $l_p$ (for $2\leq p < \infty$) are not $\infty$-projective.
\end{cor}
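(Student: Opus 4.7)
The plan is to apply Theorem~\ref{TheoremSubsequencesEquivalentTol1} directly in each case, taking the standard unit vector basis as the sequence $(u_i)_{i\in\nat}$ and exhibiting an obvious Banach lattice homomorphism onto $c_0$ sending $u_i$ to $e_i$.

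For $X = c_0$, I would simply take $T$ to be the identity map and $u_i = e_i$. The hypotheses of Theorem~\ref{TheoremSubsequencesEquivalentTol1} are then trivially satisfied, so if $c_0$ were $\infty$-projective, some subsequence $(e_{i_k})_{k \in \nat}$ would have to be equivalent to the canonical basis of $\ell_1$; this is impossible because $\norm{\sum_{k=1}^l e_{i_k}} = 1$ in $c_0$ for every $l$. For $X = \ell_p$ with $2 \leq p < \infty$, I would consider the formal inclusion $T \colon \ell_p \To c_0$ defined coordinate-wise. It is well-defined (since every $\ell_p$-sequence tends to zero), of norm one, and a Banach lattice homomorphism because lattice operations in both spaces are computed coordinate-wise. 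With $u_i = e_i \in \ell_p$ we again have $T(u_i) = e_i$ and $\norm{u_i}_{\ell_p} = 1$, so Theorem~\ref{TheoremSubsequencesEquivalentTol1} would produce an $\ell_1$-equivalent subsequence. This contradicts the computation $\norm{\sum_{k=1}^l e_{i_k}}_{\ell_p} = l^{1/p}$, since $l^{1/p}$ grows strictly slower than $l$.

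There is no serious obstacle here; the result is a direct consequence of Theorem~\ref{TheoremSubsequencesEquivalentTol1}, and the only routine verification is that the formal inclusion $\ell_p \hookrightarrow c_0$ is genuinely a Banach lattice homomorphism. I remark in passing that the very same argument in fact rules out $\infty$-projectivity of $\ell_p$ for the full range $1 < p < \infty$, the restriction $p \geq 2$ in the statement playing no role in the proof.
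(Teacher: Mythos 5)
Your proof is correct and follows exactly the paper's argument: the identity on $c_0$ and the formal inclusion $\ell_p \hookrightarrow c_0$ as the homomorphisms $T$, combined with the observation that the canonical bases have no $\ell_1$-subsequence. Your closing remark is also accurate — the same computation $\norm{\sum_{k=1}^l e_{i_k}}_{\ell_p} = l^{1/p} = o(l)$ rules out $\ell_1$-subsequences for all $1 < p < \infty$, so the restriction $p \geq 2$ in the statement is indeed not needed for this argument.
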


\begin{proof}

On the one hand, the canonical basis $(u_i)_{i \in \mathbb{N}}$ of $c_0$ does not have subsequences equivalent to the canonical basis of $\ell_1$, and the identity map $T = id_{c_0}$ is a Banach lattice homomorphism such that $T(u_i) = e_i$ for every $i \in \mathbb{N}$, where $(e_i)_{i \in \mathbb{N}}$ is the canonical basis of $c_0$. On the other hand, the canonical basis $(u_i)_{i \in \mathbb{N}}$ of $l_p$ does not have subsequences equivalent to the canonical basis of $\ell_1$, and the formal inclusion $T$ of $l_p$ into $c_0$ is a Banach lattice homomorphism such that $T(u_i) = e_i$ for every $i \in \mathbb{N}$, where $(e_i)_{i \in \mathbb{N}}$ is the canonical basis of $c_0$.
\end{proof}

As we mentioned, the fact that $c_0$ is not $\infty$-projective was already proved in \cite[Theorem 2.4]{AMRA19}. The following is a corollary of Theorem~\ref{TheoremSubsequencesEquivalentTol1} in the context of free Banach lattices $FBL[E]$:

\begin{lem}\label{LemmaSubsequencesEquivalentTol1}
Let $E$ be a Banach space such that $FBL[E]$ is $\infty$-projective, and let $(u_i)_{i \in \mathbb{N}}$ be a bounded sequence of vectors in $E$. Suppose that there exists an operator $S \colon E \longrightarrow c_0$ such that $S(u_i) = e_i$ for every $i \in \mathbb{N}$, where $(e_i)_{i \in \mathbb{N}}$ is the canonical basis of $c_0$. Then there is a subsequence $(u_{i_k})_{k \in \mathbb{N}}$ equivalent to the canonical basis of $\ell_1$.
\end{lem}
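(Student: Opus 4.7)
The plan is to reduce Lemma~\ref{LemmaSubsequencesEquivalentTol1} directly to Theorem~\ref{TheoremSubsequencesEquivalentTol1} applied to the lattice $X = FBL[E]$, by transporting everything from $E$ into its natural copy $\{\delta_x : x \in E\} \subseteq FBL[E]$.

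First, I would consider the vectors $\delta_{u_i} \in FBL[E]$; these form a bounded sequence because the natural map $x \mapsto \delta_x$ is an isometric embedding of $E$ into $FBL[E]$, so $\|\delta_{u_i}\|_{FBL[E]} = \|u_i\|_E$. Next, I would invoke the universal property stated in the introduction: the bounded operator $S \colon E \longrightarrow c_0$ factors through $FBL[E]$ as $S = \hat S \circ u$ for a unique Banach lattice homomorphism $\hat S \colon FBL[E] \longrightarrow c_0$ with $\|\hat S\| = \|S\|$. In particular, $\hat S(\delta_{u_i}) = S(u_i) = e_i$ for every $i \in \mathbb{N}$.

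At this point the hypotheses of Theorem~\ref{TheoremSubsequencesEquivalentTol1} are met: $FBL[E]$ is $\infty$-projective by assumption, $(\delta_{u_i})_{i \in \mathbb{N}}$ is a bounded sequence in it, and $\hat S$ is a Banach lattice homomorphism into $c_0$ sending $\delta_{u_i}$ to $e_i$. The theorem therefore yields a subsequence $(\delta_{u_{i_k}})_{k \in \mathbb{N}}$ equivalent to the canonical basis of $\ell_1$. Finally, using once more that the embedding $E \hookrightarrow FBL[E]$ is isometric, one has
\[
\left\|\sum_{k=1}^l \lambda_k u_{i_k}\right\|_E \;=\; \left\|\sum_{k=1}^l \lambda_k \delta_{u_{i_k}}\right\|_{FBL[E]}
\]
for every choice of scalars, so the equivalence transfers back to $(u_{i_k})$ in $E$.

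There is no real obstacle here; the argument is essentially a one-line application of the universal property of $FBL[E]$ combined with the isometric nature of the embedding $x \mapsto \delta_x$. The only thing that must be checked explicitly is that the upper $\ell_1$-estimate also holds in $E$, which is immediate from boundedness of $(u_i)$, and that the lower estimate is preserved under the isometric embedding, which is automatic.
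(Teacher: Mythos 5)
Your proposal is correct and follows essentially the same route as the paper: embed $E$ into $FBL[E]$ via $x\mapsto\delta_x$, lift $S$ to a lattice homomorphism by the universal property, apply Theorem~\ref{TheoremSubsequencesEquivalentTol1}, and pull the $\ell_1$-subsequence back. (The paper only uses boundedness of the embedding rather than its isometry for the transfer, but that is an inessential difference.)
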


\begin{proof}

Let $\phi \colon E \longrightarrow FBL[E]$ be the inclusion of $E$ into $FBL[E]$, and let $T \colon FBL[E] \longrightarrow c_0$ be the Banach lattice homomorphism given by the universal property of the free Banach lattice which extends the operator $S$.

The sequence $(\phi(u_i))_{i \in \mathbb{N}}$ is bounded in $FBL[E]$ and $T(\phi(u_i)) = S(u_i) = e_i$ for every $i \in \mathbb{N}$, so that applying Theorem \ref{TheoremSubsequencesEquivalentTol1} we have that $(\phi(u_i))_{i \in \mathbb{N}}$ has a subsequence $(\phi(u_{i_k}))_{k \in \mathbb{N}}$ equivalent to the canonical basis of $\ell_1$, which implies that $(u_{i_k})_{k \in \mathbb{N}}$ is a subsequence of $(u_i)_{i \in \mathbb{N}}$ equivalent to the canonical basis of $\ell_1$.
\end{proof}

We pass now to the proof of Theorem~\ref{TheoremSchurPropertyInBanachSpacesWithProjectiveFreeBanachLattice}, which states that $E$ has the Schur property when $FBL[E]$ is $\infty$-projective. Lemmas~\ref{LemmaEIsomorphictToASubspaceOfContinuousFunctionsWhenFBLIsProjective}, \ref{SCPCK} and \ref{LemmaToExtendOperatorsWhenFBLIsProjective} are necessary only to deal with the case when $E$ is nonseparable. The reader interested in the separable case may skip those lemmas and just apply Sobczyk's extension theorem \cite[Theorem 2.5.8]{AlbKal} where appropriate.

\begin{lem}\label{LemmaEIsomorphictToASubspaceOfContinuousFunctionsWhenFBLIsProjective}

Let $E$ be a Banach space. If $FBL[E]$ is $\infty$-projective, then $E$ is isomorphic to a subspace  of $C([-1,1]^{\Gamma})$ for some set $\Gamma$.

\end{lem}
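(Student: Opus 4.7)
The plan is to exploit the $\infty$-projectivity of $FBL[E]$ to split off $FBL[E]$ as a sublattice of a free Banach lattice on a set, and to observe that this free Banach lattice sits contractively inside $C([-1,1]^{\Gamma})$. Concretely, I take $\Gamma := B_{FBL[E]}$ regarded merely as a set and consider the inclusion $\iota \colon \Gamma \hookrightarrow FBL[E]$, which is a bounded map of norm $1$. By the universal property of $FBL(\Gamma)$, $\iota$ extends to a Banach lattice homomorphism $\Psi \colon FBL(\Gamma) \to FBL[E]$ of norm at most $1$. The image of $\Psi$ is a vector sublattice of $FBL[E]$ that contains $\Gamma$; being closed under scalar multiplication, it contains every scalar multiple of elements of $B_{FBL[E]}$, so $\Psi$ is surjective.

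Applying the $\infty$-projectivity of $FBL[E]$ to the identity, I obtain a Banach lattice homomorphism $\hat{\Psi} \colon FBL[E] \to FBL(\Gamma)$ with $\Psi \circ \hat{\Psi} = \mathrm{id}_{FBL[E]}$ and $\|\hat{\Psi}\| \le \lambda$ for some $\lambda>1$. Every $f \in FBL(\Gamma)$ is continuous on $K := [-1,1]^{\Gamma}$, and taking $n=1$ in the definition of $\|\cdot\|_{FBL(\Gamma)}$ shows that the $FBL(\Gamma)$-norm dominates the sup norm on $K$; hence there is a contractive inclusion $FBL(\Gamma) \hookrightarrow C(K)$. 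Composing with the canonical isometry $E \hookrightarrow FBL[E]$ yields a bounded operator $T \colon E \to C(K)$ with $\|T(x)\|_{\infty} \le \lambda \|x\|_E$.

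The only item to verify is the matching lower bound $\|T(x)\|_{\infty} \ge \|x\|_E$. For each $y^{\ast} \in B_{E^{\ast}}$, the section $\chi_{y^{\ast}} \colon \gamma \mapsto \gamma(y^{\ast})$ lies in $[-1,1]^{\Gamma}$, since $\|\gamma\|_{FBL[E]} \le 1$ and $\|y^{\ast}\|\le 1$ force $|\gamma(y^{\ast})| \le 1$. Unwinding the universal property of $FBL(\Gamma)$ on generators and extending by lattice operations and uniform limits gives the pointwise identity $\Psi(f)(y^{\ast}) = f(\chi_{y^{\ast}})$ for every $f \in FBL(\Gamma)$ and every $y^{\ast} \in B_{E^{\ast}}$. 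Specialising to $f = \hat{\Psi}(\delta_x)$ and using $\Psi \circ \hat{\Psi} = \mathrm{id}$ yields $T(x)(\chi_{y^{\ast}}) = \delta_x(y^{\ast}) = y^{\ast}(x)$, so $\|T(x)\|_{\infty} \ge \sup_{y^{\ast} \in B_{E^{\ast}}} |y^{\ast}(x)| = \|x\|_E$. The only delicate point is the identification of $\Psi$ with pointwise evaluation on the sections $\chi_{y^{\ast}}$; once it is in place, the rest is a direct combination of the definitions and the lifting property.
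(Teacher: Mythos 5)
Your argument is correct, and it follows a genuinely different route from the paper's. The paper takes $\Gamma$ to be a dense subset of $B_E$, forms the surjective lattice homomorphism $P \colon C([-1,1]^{\Gamma}) \to C(B_{E^\ast})$ given by composition with the evaluation embedding $x^\ast \mapsto (x^\ast(x))_{x\in\Gamma}$, lifts the canonical homomorphism $FBL[E] \to C(B_{E^\ast})$ through $P$ using $\infty$-projectivity, and restricts to $E$; the lower bound $\|Tx\|\geq\|x\|$ then falls out immediately from $\|Tx\| \geq \|PTx\| = \|\iota x\| = \|x\|$, with no need to identify the lift concretely. You instead use the standard ``projective $=$ retract of a free object'' mechanism: you realise $FBL[E]$ as a lattice-complemented sublattice of $FBL(B_{FBL[E]})$ via a lifting of the identity, and then use the contractive inclusion $FBL(\Gamma)\hookrightarrow C([-1,1]^{\Gamma})$. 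This is clean and in fact yields slightly more (the whole lattice $FBL[E]$, not just $E$, embeds complementably into a free Banach lattice), at the cost of two extra verifications that the paper avoids: the surjectivity of $\Psi$ (which you handle correctly via scalar multiples of the unit ball) and, more substantially, the identification $\Psi(f)(y^\ast)=f(\chi_{y^\ast})$ needed for the lower bound. That identification is indeed valid --- it holds on the generators $\delta_\gamma$, passes to lattice-linear combinations because the lattice operations in both $FBL(\Gamma)$ and $FBL[E]$ are pointwise, and passes to closures because the free norms dominate the relevant sup norms --- but you should make explicit that it is the uniqueness clause of the universal property (or the density argument just described) that forces the abstract $\Psi$ to coincide with the composition operator $f\mapsto f\circ(y^\ast\mapsto\chi_{y^\ast})$. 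A minor cosmetic difference: your index set $\Gamma=B_{FBL[E]}$ is much larger than the paper's (a dense subset of $B_E$), which is irrelevant to the statement but worth noting if one cares about the weight of the resulting cube.
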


\begin{proof}

Let $\Gamma$ be a dense subset of the unit ball $B_E$ of $E$. Let $B_{E^\ast}$ be the closed unit ball of the dual space $E^\ast$, endowed with the weak$^\ast$ topology. We have a surjective Banach lattice homomorphism $P \colon C([-1,1]^\Gamma) \To C(B_{E^\ast})$ given by $P(f)(x^\ast) = f((x^\ast(x))_{x\in \Gamma})$. This is just the composition operator with the continuous injection $x^\ast\mapsto (x^\ast(x))_{x\in\Gamma}$ from $B_{E^\ast}$ into $[-1,1]^\Gamma$. Let $\iota \colon E\To C(B_{E^\ast})$ be the canonical inclusion $\iota(x)(x^\ast) = x^\ast(x)$, and let $\hat{\iota} \colon FBL[E]\To C(B_{E^\ast})$ be the Banach lattice homomorphism given by the universal property of the free Banach lattice. Since $FBL[E]$ is supposed to be $\infty$-projective, there exists $\hat{T} \colon FBL[E]\To C([-1,1]^\Gamma)$ such that $P\circ\hat{T} = \hat{\iota}$. We take the restriction $T:= \hat{T}\vert_{E} \colon E \longrightarrow  C([-1,1]^{\Gamma})$. Notice that $PTx = \iota x$, and therefore $$\|Tx\| \geq \|PTx\| = \|\iota x\| = \| x\|$$ for every $x \in E$. This implies that $T$ gives an isomorphism of $E$ onto a subspace of $C([-1,1]^{\Gamma})$. 
\end{proof}

The following fact is well known in the context of a more general theory about Valdivia compacta, Plichko spaces and projectional skeletons (cf. for instance \cite{Kubis}), but we provide a short proof for the reader's convenience:

\begin{lem}\label{SCPCK}
	For every set $\Gamma$, the Banach space $C([-1,1]^\Gamma)$ has the separable complementation property. That is, for every separable subspace $G\subset C([-1,1]^\Gamma)$ there exists a separable complemented subspace $G_0$ of $C([-1,1]^\Gamma)$  such that $G\subset G_0$. 
\end{lem}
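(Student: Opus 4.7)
The plan is to exploit the well-known fact that any continuous real-valued function on a product $[-1,1]^\Gamma$ depends on only countably many coordinates, and then to produce a projection by averaging over the remaining coordinates.

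Step 1 (reduction to a countable coordinate set). First I would show that for each $f\in C([-1,1]^\Gamma)$ there is a countable $\Gamma_f\subseteq\Gamma$ and $\tilde f\in C([-1,1]^{\Gamma_f})$ with $f=\tilde f\circ\pi_{\Gamma_f}$, where $\pi_{\Gamma_f}\colon[-1,1]^\Gamma\to[-1,1]^{\Gamma_f}$ is the natural projection. This follows from uniform continuity: for each $n$ there is a finite $F_n\subseteq\Gamma$ such that $|f(x)-f(y)|<1/n$ whenever $x|_{F_n}=y|_{F_n}$, so $f$ factors through $\pi_{\bigcup_n F_n}$. Now pick a countable dense set $\{g_m\}_{m\in\nat}$ in $G$, let $\Gamma_0=\bigcup_m \Gamma_{g_m}\subseteq\Gamma$, which is countable, and use density plus continuity to conclude that \emph{every} $g\in G$ factors through $\pi_{\Gamma_0}$.

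Step 2 (the ambient separable space). Let $G_0:=\set{h\circ\pi_{\Gamma_0} : h\in C([-1,1]^{\Gamma_0})}\subseteq C([-1,1]^\Gamma)$. The map $R\colon C([-1,1]^{\Gamma_0})\To C([-1,1]^\Gamma)$, $R(h)=h\circ\pi_{\Gamma_0}$, is an isometric embedding (its image is $G_0$) and $[-1,1]^{\Gamma_0}$ is metrizable because $\Gamma_0$ is countable, so $C([-1,1]^{\Gamma_0})$ is separable. Hence $G_0$ is separable and, by Step 1, $G\subseteq G_0$.

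Step 3 (construction of the projection). Let $\nu$ be normalized Lebesgue measure on $[-1,1]$ and let $\mu=\nu^{\Gamma\setminus\Gamma_0}$ be the product probability measure on $[-1,1]^{\Gamma\setminus\Gamma_0}$. Define
$$P\colon C([-1,1]^\Gamma)\To C([-1,1]^{\Gamma_0}),\qquad P(f)(t)=\int_{[-1,1]^{\Gamma\setminus\Gamma_0}}f(t,s)\,d\mu(s).$$
Since each $f$ actually depends on a countable set of coordinates (Step 1), the integral reduces to an ordinary product integral on a metrizable compact space, so $P$ is well-defined, linear, and of norm one; continuity of $P(f)$ in $t$ follows from dominated convergence together with the countable-coordinate property. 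A direct computation shows $P\circ R=\mathrm{id}_{C([-1,1]^{\Gamma_0})}$, so $\Pi:=R\circ P\colon C([-1,1]^\Gamma)\To C([-1,1]^\Gamma)$ is a bounded projection with image exactly $G_0$, which completes the proof.

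The main obstacle is the countable-support statement in Step 1; once that is in hand, both the separability of $G_0$ and the existence of the averaging projection are routine. The only subtlety in Step 3 is making sure that the fibrewise integral genuinely defines a continuous function on $[-1,1]^{\Gamma_0}$, which again is handled by factoring $f$ through countably many coordinates and applying standard dominated convergence on the resulting metrizable product.
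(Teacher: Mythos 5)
Your proof is correct, and its skeleton is the same as the paper's: reduce a countable dense subset of $G$ to a countable coordinate set $\Gamma_0$, observe that all of $G$ then factors through $\pi_{\Gamma_0}$, and identify $G_0$ with $C([-1,1]^{\Gamma_0})$, which is separable by metrizability of $[-1,1]^{\Gamma_0}$. (The countable-dependence fact you prove by hand via uniform continuity is exactly what the paper imports as Mibu's theorem.) The one genuine difference is the projection. The paper uses the substitution operator $P(f)(x)=f(\tilde x)$, where $\tilde x$ agrees with $x$ on $\Gamma_0$ and is $0$ elsewhere; this is manifestly a norm-one lattice-friendly projection onto $G_0$ with nothing to verify beyond continuity of $x\mapsto\tilde x$. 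You instead average over the complementary coordinates against the product probability measure $\nu^{\Gamma\setminus\Gamma_0}$, i.e.\ a conditional-expectation-type projection. That also works, and your justifications are the right ones (each $f$ factors through countably many coordinates, so the fibrewise integral lives on a metrizable product and dominated convergence gives continuity of $P(f)$, while $P\circ R=\mathrm{id}$ is immediate), but it costs you the measurability and continuity checks that the substitution map avoids entirely. Both yield a norm-one projection, so nothing is lost or gained at the level of the statement; the paper's choice is simply the more economical one.
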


\begin{proof}
Let $S$ be a countable dense subset of $G$. By Mibu's theorem \cite[page 80, Theorem 4]{GenTopII}, for every $f\in S$ there exists a countable subset $\Gamma_f\subset \Gamma$ such that $f(x)=f(y)$ whenever $x|_{\Gamma_f} = y|_{\Gamma_f}$. The set $A= \bigcup_{f\in S}\Gamma_f$ is a countable set such that $f(x)=f(y)$ whenever $x|_A = y|_A$ and $f\in G$. The desired separable complemented subspace is $$G_0 = \left\{f\in C([-1,1]^\Gamma) : x|_A = y|_A \Rightarrow f(x)=f(y)\right\} \cong C([-1,1]^A).$$
The projection $P \colon C([-1,1]^\Gamma)\To G_0$ is given by $P(f)(x) = f(\tilde{x})$ where $\tilde{x}_i = x_i$ if $i\in A$ and $\tilde{x}_i =0$ if $i\not\in A$.
\end{proof}

\begin{lem}\label{LemmaToExtendOperatorsWhenFBLIsProjective}
Let $E$ be a Banach space such that $FBL[E]$ is $\infty$-projective, and let $F \subset E$ be a separable subspace. Every operator $S_0 \colon F \longrightarrow c_0$ can be extended to an operator $S \colon E \longrightarrow c_0$.
\end{lem}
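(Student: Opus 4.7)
The plan is to combine the two preceding lemmas with Sobczyk's theorem. By Lemma~\ref{LemmaEIsomorphictToASubspaceOfContinuousFunctionsWhenFBLIsProjective} we may identify $E$ (up to isomorphism) with a subspace of $C([-1,1]^\Gamma)$ for some set $\Gamma$, so that $F\subset E\subset C([-1,1]^\Gamma)$. The separable subspace $F$ is then sitting inside the (in general, nonseparable) space $C([-1,1]^\Gamma)$, where we may apply the separable complementation property.

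More precisely, by Lemma~\ref{SCPCK} there is a separable subspace $G_0\subset C([-1,1]^\Gamma)$ with $F\subset G_0$ and a bounded projection $P\colon C([-1,1]^\Gamma)\To G_0$. Since $G_0$ is separable, Sobczyk's theorem \cite[Theorem 2.5.8]{AlbKal} applies to the pair $F\subset G_0$ and the operator $S_0\colon F\To c_0$: there exists a bounded operator $\tilde{S}_0\colon G_0\To c_0$ with $\tilde{S}_0|_F = S_0$.

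Then $\tilde{S}_0\circ P\colon C([-1,1]^\Gamma)\To c_0$ is a bounded operator whose restriction to $F$ equals $S_0$ (because $P$ is the identity on $G_0\supset F$). Defining $S := (\tilde{S}_0\circ P)|_E \colon E\To c_0$ produces the desired extension, finishing the proof.

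There is no real obstacle here once Lemmas~\ref{LemmaEIsomorphictToASubspaceOfContinuousFunctionsWhenFBLIsProjective} and~\ref{SCPCK} are in hand; the only subtle point is that Sobczyk's theorem does \emph{not} apply directly to $F\subset E$, since $E$ may be nonseparable, which is precisely why we route the argument through the larger ambient $C(K)$-space where a separable complemented superspace of $F$ is available.
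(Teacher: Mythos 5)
Your proof is correct and follows essentially the same route as the paper: embed $E$ isomorphically into $C([-1,1]^\Gamma)$ via Lemma~\ref{LemmaEIsomorphictToASubspaceOfContinuousFunctionsWhenFBLIsProjective}, place $F$ inside a separable complemented subspace $G_0$ via Lemma~\ref{SCPCK}, extend by Sobczyk, and compose with the projection. The only cosmetic difference is that the paper keeps the isomorphism $T$ explicit and writes the extension as $S_0' \circ P \circ T$, whereas you suppress $T$ by identifying $E$ with its image, which is harmless.
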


\begin{proof}

By Lemma \ref{LemmaEIsomorphictToASubspaceOfContinuousFunctionsWhenFBLIsProjective}, there is an operator $T \colon E\To C([-1,1]^{\Gamma})$ that is an isomorphism onto its range, so that $G=T(F)$ is a separable subspace of $C([-1,1]^{\Gamma})$. By Lemma~\ref{SCPCK}, we can find a complemented separable subspace $G_0$ of $C([-1,1]^\Gamma)$ with $G\subset G_0$. Let $P \colon C([-1,1]^{\Gamma}) \longrightarrow G_0$ be the projection. If $S_0^{'} \colon G_0 \longrightarrow c_0$ is the extension of $S_0$ given by the Sobczyk's theorem, then $S := S_0^{'} \circ P \circ T \colon E \longrightarrow c_0$ is the desired operator.
\end{proof}

Theorem \ref{TheoremSchurPropertyInBanachSpacesWithProjectiveFreeBanachLattice} follows from the previous results:

\begin{proof}[Proof of Theorem \ref{TheoremSchurPropertyInBanachSpacesWithProjectiveFreeBanachLattice}]
If $E$ does not have the Schur property, then there is a weakly null sequence $(u_i)_{i \in \mathbb{N}}$ that does not converge to $0$ in norm. By passing to a subsequence we may assume that 0 is not in the norm closure of $\{u_i\}_{i \in \mathbb{N}}$. By the theorem of Kadets and Pe\l czy\'{n}ski \cite[Theorem 1.5.6]{AlbKal}, by passing to a further subsequence, we can suppose that $(u_i)_{i \in \mathbb{N}}$ is a basic sequence. We are going to see that there exists an operator $S \colon E \longrightarrow c_0$ such that $S(u_i) = e_i$ for every $i \in \mathbb{N}$, where $(e_i)_{i \in \mathbb{N}}$ is the canonical basis of $c_0$, and then by Lemma \ref{LemmaSubsequencesEquivalentTol1}, this will mean that $(u_i)_{i \in \mathbb{N}}$ has a subsequence equivalent to the canonical basis of $\ell_1$, a contradiction with the fact that it is weakly null.

Let $F = \overline{\text{span}}\set{u_i : i \in \mathbb{N}} \subset E$. For every $n \in \mathbb{N}$ let $u_n^* \colon F \longrightarrow \mathbb{R}$ be the $n$-th coordinate functional, given by $u_n^*(\sum_{i=1}^{\infty}\alpha_i u_i) = \alpha_n$, and let $S_0 \colon F \longrightarrow \ell_{\infty}$ be the map given by $S_0(x) = (u_n^*(x))_{n \in \mathbb{N}}$ for every $x \in F$. Since the sequence $(u_n^*)_{n \in \mathbb{N}}$ is weak$^*$-null, we have that $S_0(F) \subset c_0$.
On the other hand, $S_0(u_i) = e_i$ for every $i \in \mathbb{N}$. Now, since $F$ is separable and $FBL[E]$ is $\infty$-projective, applying Lemma \ref{LemmaToExtendOperatorsWhenFBLIsProjective} we can extend $S_0$ to an operator $S \colon E \longrightarrow c_0$ such that $S(u_i) = e_i$ for every $i \in \mathbb{N}$.
\end{proof}

 As a remark, along the first lines of the proof we justify that the Schur property is characterized by the property that every basic sequence contains a subsequence equivalent to the canonical basis of $\ell_1$. We may refer to \cite{HGM97} for a study of this kind of fact in a more general context.
 
 Finally, let us see that, in the category of nonseparable Banach spaces, the converse does not hold. By \cite[Theorem 1, e) and f)]{Hagler77}, there exist a separable Banach space $F$ and a bounded set $\Lambda$ in $F^*$ such that $E := \overline{\text{span}}(\Lambda)$ is nonseparable, has the Schur property and does not contain any copy of $\ell_1(\omega_1)$. Now, since for every set $\Gamma$ the space $[-1,1]^{\Gamma}$ is a continuous image of $\set{0,1}^m$ for some infinite cardinal number $m$, by \cite[Corollary 3]{Hagler75} we have that $E$ is not isomorphic to any subspace of $C([-1,1]^{\Gamma})$ for any set $\Gamma$, and then, by Lemma \ref{LemmaEIsomorphictToASubspaceOfContinuousFunctionsWhenFBLIsProjective}, we have that $FBL[E]$ cannot be $\infty$-projective.

\section{Projectivity of $C(K)$}\label{ProjectivityOfC(K)}

In \cite[Theorem 11.4]{dPW15} it is proved that for a compact subset $K$ of $\mathbb{R}^n$, $C(K)$, with the supremum norm, is a $1^+$-projective Banach lattice if, and only if, $K$ is an absolute neighbourhood retract of $\mathbb{R}^n$. In this section we prove a similar result for $K$ being a compact Hausdorff topological space not necessarily inside $\mathbb{R}^n$. We first recall some basic definitions and facts.

\begin{defn}\label{defanr}
We say that a compact space $K$ is an \textit{absolute neighbourhood retract} (ANR) if whenever $i \colon K\To X$ is a homeomorphism between $K$ and a subspace of the compact space $X$, there exist an open set $U$ and a continuous function $\phi \colon U\To K$ such that $i(K) \subset U \subset X$ and $\phi(i(k)) = k$ for all $k\in K$.
\end{defn}

\begin{lem}\label{LemmaNeighbourhoodRetract}
	In the situation of Definition~\ref{defanr}, there exist a continuous function $u \colon X \longrightarrow [0,1]$ and a continuous function $\varphi \colon X \setminus u^{-1}(0) \longrightarrow K$ such that $u(i(k)) = 1$ and $\varphi (i(k)) = k$ for every $k \in K$.
\end{lem}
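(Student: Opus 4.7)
The plan is to construct $u$ by a standard Urysohn-type separation, and then simply restrict the retraction $\phi$ provided by the definition of ANR.

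First I would recall the setup from Definition~\ref{defanr}: we are given an open set $U$ with $i(K)\subset U\subset X$ and a continuous retraction $\phi \colon U\To K$ with $\phi\circ i=\mathrm{id}_K$. Since $K$ is compact and $i$ is a homeomorphism onto its image, $i(K)$ is a compact, hence closed, subset of the compact Hausdorff (therefore normal) space $X$; and $X\setminus U$ is closed and disjoint from $i(K)$. Urysohn's lemma then yields a continuous function $u \colon X \To [0,1]$ with $u\equiv 1$ on $i(K)$ and $u\equiv 0$ on $X\setminus U$.

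By construction, $u^{-1}(0)\supset X\setminus U$, equivalently $X\setminus u^{-1}(0)\subset U$. Hence I can simply define
\[
\varphi := \phi\big|_{X\setminus u^{-1}(0)} \colon X\setminus u^{-1}(0) \To K,
\]
which is continuous as a restriction of the continuous function $\phi$. The required properties are immediate: for every $k\in K$ we have $u(i(k))=1$ by the choice of $u$, so $i(k)\in X\setminus u^{-1}(0)$ and $\varphi(i(k)) = \phi(i(k)) = k$.

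There is no real obstacle here; the only point to verify is that the hypothesis on $X$ in Definition~\ref{defanr} (that $X$ be compact Hausdorff) is enough to apply Urysohn's lemma, which it is since compact Hausdorff implies normal. Everything else is a direct consequence of the retraction $\phi$ already furnished by the ANR property of $K$.
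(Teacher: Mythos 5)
Your proof is correct and follows essentially the same route as the paper: apply Urysohn's lemma in the normal space $X$ to separate the closed set $i(K)$ from $X\setminus U$, note that $X\setminus u^{-1}(0)\subset U$, and restrict the retraction $\phi$. (If anything, your version is slightly cleaner, since the paper's additional requirement that $u$ take values in $(0,1)$ on $U\setminus i(K)$ is unnecessary.)
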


\begin{proof}
 By the Urysohn's lemma, we can find a continuous function $u \colon X \longrightarrow [0,1]$ such that $u(i(k)) = 1$ for every $k \in K$, $u(x) = 0$ for every $x \in X \setminus U$, and $u(x) \in (0,1)$ for every $x \in U\setminus K$. Notice that $X \setminus u^{-1}(0)\subset U$, so the statement of the Lemma is satisfied.
\end{proof}

\begin{prop}\cite[Proposition 2.1]{ARA19}\label{quotientofprojective}
Let $P$ be a $1^+$-projective Banach lattice, $\mathcal{I}$ an ideal of $P$ and $T \colon P\To P/\mathcal{I}$ the quotient map. The quotient $P/\mathcal{I}$ is $1^+$-projective if, and only if, for every $\varepsilon>0$ there exists a Banach lattice homomorphism $S_\varepsilon \colon P/\mathcal{I}\To P$ such that $T\circ S_\varepsilon = id_{P/\mathcal{I}}$ and $\|S_\varepsilon\|\leq 1+\varepsilon$.
\end{prop}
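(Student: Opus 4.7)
The plan is to exploit the fact that the desired liftings can always be obtained by composing a lift out of the big space $P$ with the putative right inverses $S_\varepsilon$. The equivalence is essentially formal once one sets up the diagram correctly.

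For the necessity direction, I would simply apply the definition of $1^+$-projectivity to $P/\mathcal{I}$ itself, choosing the target Banach lattice to be $P$, the closed ideal to be $\mathcal{I}$, the quotient map to be $T$, and the homomorphism to be lifted to be the identity $id_{P/\mathcal{I}}\colon P/\mathcal{I}\To P/\mathcal{I}$. For each $\varepsilon>0$ this yields the required $S_\varepsilon\colon P/\mathcal{I}\To P$ with $T\circ S_\varepsilon=id_{P/\mathcal{I}}$ and $\|S_\varepsilon\|\leq (1+\varepsilon)\|id_{P/\mathcal{I}}\|=1+\varepsilon$.

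For the sufficiency direction, suppose the family $(S_\varepsilon)_{\varepsilon>0}$ is given and fix an arbitrary $\varepsilon'>0$. I would need to show $P/\mathcal{I}$ is $(1+\varepsilon')$-projective. Choose $\delta,\varepsilon>0$ with $(1+\delta)(1+\varepsilon)\leq 1+\varepsilon'$. Given a Banach lattice $X$, a closed ideal $J\subset X$ with quotient map $Q\colon X\To X/J$, and a Banach lattice homomorphism $\widetilde{T}\colon P/\mathcal{I}\To X/J$, I would consider the composition $\widetilde{T}\circ T\colon P\To X/J$, which is a Banach lattice homomorphism with $\|\widetilde{T}\circ T\|\leq \|\widetilde{T}\|$ (since $\|T\|\leq 1$). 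Using the $1^+$-projectivity of $P$, I would lift it to a Banach lattice homomorphism $\widehat{R}\colon P\To X$ such that $Q\circ\widehat{R}=\widetilde{T}\circ T$ and $\|\widehat{R}\|\leq (1+\delta)\|\widetilde{T}\|$. Then I would define $\widehat{\widetilde{T}}:=\widehat{R}\circ S_\varepsilon\colon P/\mathcal{I}\To X$, which is a Banach lattice homomorphism as a composition of two such, and verify the two identities
\[
Q\circ\widehat{\widetilde{T}}=(Q\circ\widehat{R})\circ S_\varepsilon=\widetilde{T}\circ T\circ S_\varepsilon=\widetilde{T},
\]
\[
\|\widehat{\widetilde{T}}\|\leq \|\widehat{R}\|\cdot\|S_\varepsilon\|\leq (1+\delta)(1+\varepsilon)\|\widetilde{T}\|\leq (1+\varepsilon')\|\widetilde{T}\|.
\]

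There is no genuine obstacle here: the whole argument is a diagram chase together with multiplicative norm bookkeeping. The only point worth checking carefully is that $\widehat{R}\circ S_\varepsilon$ is a Banach lattice homomorphism, which is automatic since both factors are, and that the constants can be arranged to satisfy $(1+\delta)(1+\varepsilon)\leq 1+\varepsilon'$ for arbitrary $\varepsilon'>0$, which holds by choosing $\delta$ and $\varepsilon$ small enough.
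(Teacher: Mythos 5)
Your proof is correct and is essentially the standard argument: the paper itself only cites this as \cite[Proposition 2.1]{ARA19}, and the proof given there is the same diagram chase — necessity by lifting $id_{P/\mathcal{I}}$ through $T$, sufficiency by lifting $\widetilde{T}\circ T$ from the $1^+$-projective lattice $P$ and composing with $S_\varepsilon$, with the same multiplicative bookkeeping of constants.
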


\begin{lem}\label{LemmaFreeNormOfFunctionTimesDeltaLessInfiniteNormOfFunction}
Let $A$ be a set, $f\colon [-1,1]^A \longrightarrow \mathbb{R}$ a function, and $a \in A$. Then, the $FBL(A)$-norm of the pointwise product $f\cdot |\delta_a|$ is less than or equal to the supremum norm $\|f\|_\infty$. 
\end{lem}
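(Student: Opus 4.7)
The plan is to unwind the definition of the $FBL(A)$-norm and use the fact that $|\delta_a(x^*)| = |x^*(a)|$ together with the single constraint that defines admissible sequences. Concretely, for any $n \in \mathbb{N}$ and any $x_1^*, \ldots, x_n^* \in [-1,1]^A$ with $\sup_{x \in A} \sum_{i=1}^n |x_i^*(x)| \leq 1$, I would estimate
$$\sum_{i=1}^n |f(x_i^*) \cdot |\delta_a(x_i^*)|| \;=\; \sum_{i=1}^n |f(x_i^*)| \, |x_i^*(a)| \;\leq\; \|f\|_\infty \sum_{i=1}^n |x_i^*(a)|.$$
Since $a \in A$, the admissibility constraint gives $\sum_{i=1}^n |x_i^*(a)| \leq \sup_{x \in A} \sum_{i=1}^n |x_i^*(x)| \leq 1$, and hence the whole expression is bounded by $\|f\|_\infty$.

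Taking the supremum over all such admissible $(x_1^*, \ldots, x_n^*)$ then yields $\|f \cdot |\delta_a|\|_{FBL(A)} \leq \|f\|_\infty$, which is precisely the statement. Note that no regularity of $f$ is needed at any stage; the assertion is really about the norm functional being well-defined on arbitrary real-valued $f$ on $[-1,1]^A$, and we only use positive homogeneity of the absolute value together with the defining supremum.

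There is no substantive obstacle here: the proof is a direct unwrapping of the definition, and the only ``trick'' is noticing that replacing the weight at the single coordinate $a$ absorbs one unit of total mass. If any care is needed, it is just to be explicit that $\|f\|_\infty = \sup_{x^* \in [-1,1]^A} |f(x^*)|$ may a priori be infinite, in which case the inequality is vacuously true, so without loss of generality $\|f\|_\infty < \infty$ when carrying out the estimate.
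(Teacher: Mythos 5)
Your proof is correct and follows essentially the same route as the paper: both unwind the definition of the $FBL(A)$-norm, use $|\delta_a(x_k^*)|=|x_k^*(a)|$, and absorb the weights $|x_k^*(a)|$ via the admissibility constraint $\sup_{x\in A}\sum_k|x_k^*(x)|\leq 1$ evaluated at the single coordinate $a$. The only cosmetic difference is that the paper bounds the weighted sum by $\max_k|f(x_k^*)|$ before passing to $\|f\|_\infty$, whereas you factor out $\|f\|_\infty$ first; the estimate is identical.
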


\begin{proof}
\begin{eqnarray*} 
 \|f\cdot|\delta_a|\| & := &  \sup \set{\sum_{k = 1}^m \abs{f\cdot |\delta_a|(x_k^*)} :  m \in \mathbb{N}, \, x_k^* \in [-1,1]^A, \text{ }\sup_{x \in A} \sum_{k=1}^m \abs{x_k^{\ast}(x)} \leq 1 } \\ 
 &=& \sup \set{\sum_{k = 1}^m \abs{f(x_k^*)}\cdot\abs{\delta_a(x_k^*)} :  m \in \mathbb{N}, \, x_k^* \in [-1,1]^A, \text{ }\sup_{x \in A} \sum_{k=1}^m \abs{x_k^{\ast}(x)} \leq 1 } \\
 &\leq & \sup \set{\sum_{k = 1}^m \abs{f(x_k^*)}\cdot \abs{x_k^*(a)} :  m \in \mathbb{N}, \, x_k^* \in [-1,1]^A, \text{ } \sum_{k=1}^m \abs{x_k^{\ast}(a)} \leq 1 } \\
 & \leq & \sup \set{\max \set{|f(x_k^*)| : k = 1,\ldots, m} :  m \in \mathbb{N}, \, x_k^* \in [-1,1]^A, \text{ } \sum_{k=1}^m \abs{x_k^{\ast}(a)} \leq 1 } \\ 
 & \leq & \|f\|_{\infty}.
\end{eqnarray*}
\end{proof}

\begin{proof}[Proof of Theorem \ref{thmANR}]

In \cite[Proposition 11.7]{dPW15} it is proved that if $C(K)$ is $1^+$-projective, then $K$ is an ANR.

For the converse, let $X := [-1,1]^{B_{C(K)}}$, where $B_{C(K)} =  \{f\in C(K) : \|f\|_\infty\leq 1\}$ is the closed unit ball of the space of continuous functions. The map  $i \colon K \longrightarrow X$ given by $i(k) = (\gamma(k))_{\gamma \in B_{C(K)}}$ is an homeomorphism between $K$ and $i(K)$. By Lemma \ref{LemmaNeighbourhoodRetract} there exist a continuous function $u \colon X \longrightarrow [0,1]$ and a continuous function $\varphi\colon X \setminus u^{-1}(0) \longrightarrow K$ such that $u(i(k)) = 1$ and $\varphi (i(k)) = k$ for every $k \in K$.

By the universal property of the free Banach lattice, there is a Banach lattice homomorphism $T\colon FBL(B_{C(K)}) \longrightarrow C(K)$ that extends the inclusion $B_{C(K)}\hookrightarrow C(K)$. This is clearly a quotient map and its action is given by $Tf(k) = f(i(k))$ for every $f \in FBL(B_{C(K)})$, $k \in K$.

Since $FBL(B_{C(K)})$ is $1^+$-projective (\cite[Proposition 10.2]{dPW15}), by Proposition \ref{quotientofprojective}, it is enough to prove that there exists a Banach lattice homomorphism $S\colon C(K) \longrightarrow FBL(B_{C(K)})$ such that $T \circ S = id_{C(K)}$ and $\norm{S} \leq 1$.

Let $\bar{1} \in B_{C(K)}$ be the constant function equal to $1$, and let $v\colon \set{x \in X: x_{\bar{1}} \neq 0} \longrightarrow X$ be the map given by $v(x) = (v(x)_{\gamma})_{\gamma \in B_{C(K)}}$, where $$v(x)_{\gamma}= \left\{ \begin{array}{lcc}
              -1 & \text{if} &  \frac{x_{\gamma}}{x_{\bar{1}}} < -1, \\
              \frac{x_{\gamma}}{x_{\bar{1}}} &  \text{if}  &  \frac{x_{\gamma}}{x_{\bar{1}}}\in [-1,1]  ,\\
			  1 &\text{if} & \frac{x_{\gamma}}{x_{\bar{1}}}>1,
              \end{array}
    \right.$$ 

for every $x = (x_{\gamma})_{\gamma \in B_{C(K)}} \in X$ with $x_{\bar{1}} \neq 0$.

For a given $h \in C(K)$, define $f\colon X \longrightarrow \mathbb{R}$ by
$$f(x) := \begin{cases}
(h \circ \varphi \circ v)\cdot (u \circ v)(x) &\text{ if } x_{\bar{1}} \neq 0 \text{ and } u(v(x))\neq 0,\\
0 &\text{ otherwise }.
\end{cases}$$
Formally, we should call the function $f_h$ as it depends on $h$. But we omit the subindex for a more friendly notation (in fact the subindex would always be ``$h$'' along the proof). Notice also that $x_{\bar{1}}\neq 0$ is required for $x$ to be in the domain of $v$ and $u(v(x))\neq 0$ is required for $v(x)$ to be in the domain of $\varphi$.

The desired $S\colon C(K) \longrightarrow FBL(B_{C(K)})$ will be the map given by $Sh(x) = (f\cdot |\delta_{\bar{1}}|)(x)$ for every $h \in C(K)$, $x \in X$. The function $Sh$ is a real-valued function on $X=[-1,1]^{B_{C(K)}}$, and we will need to prove that, in fact, $Sh \in FBL(B_{C(K)})$. Once that is proved, the rest of properties required for $S$ are relatively easy to check: It is clear that $S$ is a linear map, and it preserves the lattice operations $\wedge$ and $\vee$. The fact that $\|S\|\leq 1$ comes from Lemma \ref{LemmaFreeNormOfFunctionTimesDeltaLessInfiniteNormOfFunction}: $$\norm{Sh} = \norm{f\cdot |\delta_{\bar{1}}|} \leq \norm{f}_{\infty} = \norm{(h \circ \varphi \circ v)(u \circ v))}_{\infty} \leq \norm{h}_{\infty}.$$
To see that $T \circ S = id_{C(K)}$, take $h \in C(K)$ and $k \in K$. Remember that $u(i(k))= 1$ and $\varphi(i(k))=k$ and notice that $i(k)_{\bar{1}} =1$ and $v(i(k)) = i(k)$ for every $k \in K$, so 
$$TSh(k) = Sh(i(k)) = (f\cdot |\delta_{\bar{1}}|)(i(k)) = h(\varphi(i(k)))\cdot u(i(k))  = h(k).$$

So we turn now to the remaining more delicate question whether $Sh \in FBL(B_{C(K)})$ for every $h \in C(K)$. Functions in the free Banach lattice must be continuous (in the product topology) and positively homogeneous (commute with multiplication by positive scalars). We check first that $Sh$ has these two properties. Clearly, $Sh$ is continuous on the open set $\set{x \in X : x_{\bar{1}}\neq 0,  u(v(x)) \neq 0}$ because $Sh$ is expressed there by the formula $(h\circ \varphi\circ v)\cdot (u\circ v)\cdot |\delta_{\bar{1}}|$. If $x_{\bar{1}}=0$, then for every $\varepsilon>0$ there is a neighbourhood $W$ such that $|f(y)|\cdot |y_{\bar{1}}|\leq \|h\|_\infty\cdot\varepsilon$ for all $y\in W$, so $Sh$ is also continuous at those $x$. If $x_{\bar{1}}\neq 0$ but $u(v(x))=0$, again, given $\varepsilon>0$, we can find a neighbourhood $W$ of $x$ where $y_{\bar{1}}\neq 0$ and $|f(y)|\cdot|y_{\bar{1}}|\leq \|h\|_\infty\cdot \varepsilon$ for all $y\in W$.  For positive homogeneity, on the one hand, if $x_{\bar{1}} \neq 0$, then  $v(\lambda x) = v(x)$ for every $0 < \lambda \leq 1$ and $x \in X$, while $|\delta_{\bar{1}}|$ is positively homogeneous. If $x_{\bar{1}} = 0$, then for every $0 < \lambda \leq 1$ we have that $Sh(\lambda x) = 0 = \lambda Sh(x)$.

Being continuous and positively homogeneous is a sufficient condition to belong to $FBL(A)$ in the case when $A$ is finite \cite{dPW15}. What we can deduce from this in the infinite case is that a function $g\colon [-1,1]^A\To \mathbb{R}$ belongs to $FBL(A)$ provided that is continuous, positively homogeneous and depends on finitely many coordinates \cite[Lemma 3.1]{AMRA19}. Depending on finitely many coordinates means that there is a finite subset $A_0 \subset A$ such that $g(x)=g(y)$ whenever $x|_{A_0} = y|_{A_0}$. We will prove that $Sh$ can be obtained as the limit, in the $FBL(B_{C(K)})$-norm, of a sequence of continuous and positively homogeneous functions that only depend on a finite number of coordinates from $[-1,1]
^{B_{C(K)}}$. This proves that $Sh\in FBL(B_{C(K)})$ since it is a closed space. 

Consider $L = \{x\in X : x_{\bar{1}} = 1\}\subset X$. Since the restriction $f|_L$ is a continuous function on the compact space $L$, by the Stone-Weierstrass' theorem, for every $n \in \mathbb{N}$ we can find a continuous function $f_n^+\in C(L)$ that depends only on finitely many coordinates of the cube $[-1,1]^{B_{C(K)}}$ such that  $$\norm{f|_L - f_n^+}_{\infty} < \frac{1}{n}.$$
Define $f_n\colon X \longrightarrow \mathbb{R}$ by
$$f_n(x) := \begin{cases}
f_n^+(v(x)) &\text{ if } x_{\bar{1}} \neq 0,\\
0 &\text{ otherwise }.
\end{cases}$$
It is clear that $f_n(\lambda x) = f_n(x)$ for all $0< \lambda \leq 1$ and $x \in X$, since $v(\lambda x) = v(x)$. Moreover, $f_n$ depends on finitely many coordinates because $f_n^+$ does so, and each coordinate of $v$ depends on two coordinates ($v(x)_\gamma$ only depends on $x_\gamma$ and $x_{\bar{1}}$). On the other hand, $f_n\cdot|\delta_{\bar{1}}|$ is continuous in $X$. This is because $f_n\cdot|\delta_{\bar{1}}|$ is continuous in $\set{x \in X : x_{\bar{1}} \neq 0}$ clearly, and, if $x_{\bar{1}} = 0$, then for every $\varepsilon > 0$ there is a neighbourhood $W$ such that $|f_n(y)|\cdot|y_{\bar{1}}| \leq \norm{f_n^+}_{\infty}\cdot\varepsilon$ for all $y \in W$. Thus, the functions $f_n\cdot|\delta_{\bar{1}}|$ are all continuous, positively homogeneous and depend on finitely many coordinates. It follows from the aforementioned  fact \cite[Lemma 3.1]{AMRA19} that $f_n\cdot|\delta_{\bar{1}}| \in FBL(B_{C(K)})$ for every $n \in \mathbb{N}$. It only remains to prove that $\norm{Sh - f_n\cdot |\delta_{\bar{1}}|} \rightarrow 0$ when $n \rightarrow \infty$. For this, first notice that $v(v(x)) = v(x)$ for all $x\in X$ with $x_{\bar{1}}\neq 0$. This is just because $v(x)_{\bar{1}} = 1$. From this, it follows that $f(x) = f(v(x))$ for all $x$ with $x_{\bar{1}}\neq 0$. This together with  Lemma \ref{LemmaFreeNormOfFunctionTimesDeltaLessInfiniteNormOfFunction} gives: 
\begin{eqnarray*}
\norm{Sh - f_n\cdot |\delta_{\bar{1}}|} & = & \norm{f\cdot |\delta_{\bar{1}}| - f_n\cdot |\delta_{\bar{1}}|} = \norm{(f-f_n)\cdot |\delta_{\bar{1}}|} \\ & \leq & \norm{f-f_n}_{\infty}\\ 
& = & \sup \set{|f(x)-f_n(x)| : x \in X}\\ 
& = & \sup \set{|f(x)-f_n(x)| : x \in X, x_{\bar{1}} \neq 0}\\ 
& = & \sup \set{|f(x)-f_n^+(v(x))| : x \in X, x_{\bar{1}} \neq 0}\\
& = & \sup \set{|f(v(x))-f_n^+(v(x))| : x \in X, x_{\bar{1}} \neq 0}\\
&\leq & \sup \set{|f(y)-f_n^+(y)| : y \in L} = \|f|_L - f_n^+\|_\infty <\frac{1}{n}.
\end{eqnarray*}
\end{proof}

\section{Problems}

Concerning the different variations of projectivity, it was already observed in \cite{dPW15} that if a Banach lattice $P$ has the property that every homomorphism into a quotient $T\colon P\To X/J$ can be lifted to a homomorphism $\hat{T}\colon  P\To X$, then $P$ is $\lambda$-projective for some $\lambda$. It is obvious that the class of $\infty$-projective Banach lattices is closed under renorming but the $1^+$-projective class is not. It was asked in \cite{dPW15} whether every $\infty$-projective Banach lattice is the renorming of a $1^+$-projective Banach lattice. But, in fact, we do not know a single example that separates these two classes, even by renorming.

\begin{prob}
	Find an equivalent norm on a $1^+$-projective Banach lattice that makes it $\infty$-projective but not $1^+$-projective. 
\end{prob}  

A natural candidate would be $FBL[E]$ with $E$ a suitable Banach space renorming of $\ell_1$.

Theorems~\ref{TheoremSubsequencesEquivalentTol1} and~\ref{TheoremSchurPropertyInBanachSpacesWithProjectiveFreeBanachLattice} suggest a large presence of the Banach space $\ell_1$ inside projective Banach lattices. This does not exclude other subspaces ($C[0,1]$ is $1^+$-projective and contains isometric copies of any separable Banach space) but we may at least ask:

\begin{prob}
	If $X$ is $\infty$-projective and infinite-dimensional, must $X$ contain a Banach subspace isomorphic to $\ell_1$?
\end{prob}

We proved that $E$ has the Schur property if $FBL[E]$ is $\infty$-projective. But the only positive case that we know is that $FBL[\ell_1(A)] = FBL(A)$ is $1^+$-projective.

\begin{prob}
	Is there a Banach space $E$ with the Schur property, not isometric to $\ell_1(A)$, for which $FBL[E]$ is $1^+$-projective? 	Is there a Banach space $E$ with the Schur property, not isomorphic to $\ell_1(A)$, for which $FBL[E]$ is $\infty$-projective?
\end{prob}

For a more complete picture, let us mention that other Banach lattices proved by B. de Pagter and A. W. Wickstead to be $1^+$-projective include all finite-dimensional Banach lattices (\cite[Theorem 11.1]{dPW15}), $\ell_1$ and any countable $\ell_1$-sum of separable $1^+$-projective Banach lattices (\cite[Theorem 11.11]{dPW15}).


\begin{thebibliography}{99}

\bibitem{AlbKal} F. Albiac, N. J. Kalton, \textit{Topics in Banach Space Theory}, Graduate Texts in Mathematics 233, Springer 2016.

\bibitem{GenTopII} A. V. Arhangel'skii, E. G. Sklyarenko, \textit{General Topology II}, Encyclopaedia of Mathematical Sciences 50, Springer 1996.
 
\bibitem{AMRA19} A. Avil\'es, G. Mart\'inez-Cervantes, J. D. Rodr\'iguez Abell\'an, \emph{On the Banach lattice $c_0$}, Revista Matem\'atica Complutense. To appear.

\bibitem{ART18} A.\ Avil\'es, J.\  Rodr\'iguez, P.\  Tradacete,  \emph{The free Banach lattice generated by a
Banach space}, J. Funct. Anal. 274 (2018), 2955--2977.

\bibitem{ARA19} A. Avil\'es, J. D. Rodr\'iguez Abell\'an, \textit{Projectivity of the free Banach lattice generated by a lattice}, Archiv der Mathematik 113 (2019), 515--524.

\bibitem{Hagler75} J. Hagler, \textit{On the structure of $S$ and $C(S)$ for $S$ dyadic}, Transactions of the American Mathematical Society 214 (1975), 415--428.

\bibitem{Hagler77} J. Hagler, \textit{A counterexample to several questions about Banach spaces}, Studia Mathematica 60 (1977), 289--308.

\bibitem{HGM97} S.\ Hern\'andez, J.\ Galindo, S.\ Macario, \emph{A characterization of the Schur property by means of the Bohr topology}, Topology and its Applications 97 (1997), 99--108.

\bibitem{Kubis} W. Kubi\'{s}, \textit{Banach spaces with projectional skeletons}, J. Math. Anal. Appl. 350 (2009) 758--776.

\bibitem{dPW15} B.\ de Pagter,  A. W.\  Wickstead, \textit{Free and projective Banach lattices}, Proc. Royal Soc. Edinburgh Sect. A, 145 (2015), 105--143.

\bibitem{Troitsky} V. G. Troitsky, \textit{Simple constructions of $FBL(A)$ and $FBL[E]$}, Positivity 23 (2019), no. 5, 1173--1178.

\end{thebibliography}
\end{document}